\newcommand{\catA}[1]{{\mathfrak A}}
\newcommand{\catI}[1]{{\mathfrak I}}
\newcommand{\catS}[1]{{\mathfrak S}}
\newcommand{\p}[1]{{\mathbb{P}^{#1}}}
\newcommand{\bp}[1]{\widetilde{\mathbb{P}^{#1}}}
\newcommand{\Quot}{{\rm Quot}}
\DeclareMathOperator{\ext}{Ext}
\DeclareMathOperator{\Hom}{Hom}
\DeclareMathOperator{\ho}{H}
\DeclareMathOperator{\aut}{Aut}
\DeclareMathOperator{\obj}{Obj}
\newtheorem{theorem}{Theorem}[section]
\newtheorem{proposition}[theorem]{Proposition}
\newtheorem{lemma}[theorem]{Lemma}
\newtheorem{corollary}[theorem]{Corollary}
\newtheorem{definition}[theorem]{{\bf Definition}}
\title{Framed instanton pairs on the blow-up of the projective $3$-space at a point}
\author[$\ddag$]{Abdelmoubine Amar Henni$^{\dagger}$} 
\affil[$\ddag$]{{\small Departamento de Matem\'atica MTM - UFSC 

Campus Universit\'ario Trindade CEP 88.040-900  

Florian\'opolis-SC, Brazil. 
}}
\date{}
\begin{document}

\maketitle

\vspace{1cm}

\begin{abstract}
We study Huybrechts and Lehn framed sheaves on the Fano $3$-fold given by blowing-up the projective $3$ space at a point. In contrast with the cases of curves and surfaces, there are very few examples of framed sheaves in higher dimensions and the following notes are intended for constructing new examples of such objects in dimension $3.$ The  given examples are shown to come in families and we prove that their moduli space is fine, quasi-projective and unobstructed.

\end{abstract}

\vspace{1cm}

\tableofcontents

\footnotetext[2]{The author was partially supported by the CAPES-COFECUB $08/ 2018$ project and MATH-AMSUD project: GS$\&$MS $21$-MATH$-06$  $2021\hspace{0.1cm}\& \hspace{0.1cm}2022.$
}

\section{Introduction}


In \cite{Huy1, Huy2, Lehn}, the notion of {\em framed pairs} was introduced and the construction of their moduli has been studied. Systematic examination of these objects and the behavior of their moduli spaces were mostly studied on curves and surfaces \cite{Nevins, bruzzo, sala}. In higher dimensions, there are very few examples that have been studied. This is because, some important results and constructions established in the case of curves and surfaces become very hard to extend on general ground, for higher dimensional varieties. An example of such a result is \cite[Theorem 3.1]{bruzzo}; basically framed sheaves defined in \cite{Lehn}, with certain framing, called {\em good framing}, are automatically slope stable as framed pairs when we work on curves or surfaces, but is is not clear under which additional conditions they might automatically enjoy slope stability, in higher dimensions, as showed by Oprea in the example given in \cite[p. 325]{Oprea}.    

More generally, decorated sheaves (and complexes), that is, sheaves (or complexes) with additional structures are widely studied in algebraic geometry due to their abundance and natural occurrence such as Higgs bundles and instantons that appeared first in gauge theory and mathematical physics. Other examples are encountered imposing additional conditions in order to get well behaved moduli problems. For more details the reader might consult \cite{sala} and references therein.

This work is dedicated to the study certain framed sheaves that are related to instantons on the blow-up of the projective space $\p3$ at a point. We show that some of these instantons can be endowed with a framing given by a homomorphism to a fixed instanton sheaf $\mathcal{E}_{D}$ on a divisor $D$ given by the pull-back of a hyperplane in $\p3.$ We prove that such framed sheaves are slope stable as Huybrechts and Lehn pairs. Moreover, that there exists a quasi-projective scheme that represent their moduli functor. In addition, we also show that this is unobstracted for t'Hooft instanton bundles, constructed in \cite{ccgm}.

Other illustrations of moduli spaces of framed sheaves in higher dimensions can be found in the following: 
In \cite{Oprea} Oprea studied the obstruction theory of the moduli of framed sheaves over certain local Calabi-Yau $3$-folds and their topological invariants by using toric techniques and virtual localization. Also, in \cite{ricolfi} the authors proved an isomorphism between the moduli space of certain framed sheaves and the Grothendieck's Quot scheme of points over projective spaces of dimension greater than or equal to $3.$

Our paper is organized as follows; the first section is a reminder and an exposition of the main definitions and results concerning framed sheaves (or pairs), mainly from \cite{Huy1, Huy2}. Section \ref{framedp3} is divided into two parts; the first one contains a background material on instanton sheaves on $\bp3.$ Finally, in the last part of Section \ref{framedp3} we prove our main results.

\section{Framed sheaves and their moduli}

In this section we give some outline of the general theory of framed sheaves and their moduli spaces following references \cite{Huy1,Huy2}.

Let $X$ be a non-singular $d-$dimensional projective variety over an algebraically closed field $k$ endowed with a very ample line bundle $\mathcal{O}_{X}(1).$ The degree of $X$ with respect to the embedding given by $\mathcal{O}_{X}(1)$ is denoted by $g.$ Let $\mathcal{D}$ be a coherent sheaf of $\mathcal{O}_{X}-$modules  and $\delta$ a polynomial with rational coefficients and positive leading term.
\begin{definition}
A  framed sheaf is a pair consisting of a rank $r$ coherent sheaf, of $\mathcal{O}_{X}-$modules, $\mathcal{E}$ and a homomorphism $\alpha:\mathcal{E}\to\mathcal{D},$ called the framing of $\mathcal{E}.$
\end{definition}
We also introduce the function $\epsilon(\alpha)$ defined by $$\epsilon(\alpha)=\left\{\begin{array}{ll}1 & \textnormal{if } \alpha\neq0\\0 &\textnormal{ otherwise } \end{array}\right.$$ We denote by $P_{\mathcal{E}}(n):=\chi(\mathcal{E}(n))$ the \emph{Hilbert polynomial} of $\mathcal{E},$ that is, the Euler characteristic of $\mathcal{E}(n):=\mathcal{E}\otimes\mathcal{O}_{X}(n)$ and we define \emph{Hilbert polynomial of the pair}  as $P_{(\mathcal{E},\alpha)}:=P_{\mathcal{E}}-\epsilon(\alpha)\delta$ the $(\mathcal{E},\alpha).$ The \emph{reduced Hilbert polynomial of the pair} $(\mathcal{E},\alpha)$ is defined by $$p_{(\mathcal{E},\alpha)}:=\frac{P_{(\mathcal{E},\alpha)}}{r}.$$

If $\mathcal{E}'$ is a coherent subsheaf of $\mathcal{E}$ with quotient $\mathcal{E}''=\mathcal{E}/\mathcal{E}',$ then a framing $\alpha:\mathcal{E}\to\mathcal{D}$ induces a framing $\alpha'=\alpha|_{\mathcal{E}'}$ on $\mathcal{E}'$ and $\alpha''=\alpha|_{\mathcal{E}''}$ on $\mathcal{E}'':$ $\alpha''$ is zero if $\alpha'\neq0$ and is the induced homomorphism on $\mathcal{E}''$ if $\alpha'$ vanishes. As for sheaves, the Hilbert polynomial of framed sheaves also behaves additively $$P_{(\mathcal{E},\alpha)}=P_{(\mathcal{E}',\alpha')}+P_{(\mathcal{E}'',\alpha'')}.$$

\begin{definition}
A framed sheaf $(\mathcal{E},\alpha)$ of rank $r$ is said to be (semi-)stable with respect to $\delta$ with Hilbert polynomial $p$ if for all proper subsheaves $\mathcal{E}',$ of rank $r',$ with induced framing $\alpha'$  the inequality $$p_{(\mathcal{E}',\alpha')}\hspace{0,1cm}(\leq)\hspace{0,1cm}p_{(\mathcal{E},\alpha)}.$$
\end{definition}

By $(\leq)$ we mean $p_{(\mathcal{E}',\alpha')}\leq\hspace{0,1cm} p_{(\mathcal{E},\alpha)}$ for semi-stability or $p_{(\mathcal{E}',\alpha')}<\hspace{0,1cm}p_{(\mathcal{E},\alpha)}$ for stability. 

The following results follow directly from the previous definitions:
\begin{lemma}
If $(\mathcal{E},\alpha)$ is semi-stable, then its kernel $\ker\alpha$ is torsion-free, i.e., $\alpha$ embeds the torsion sheaf $T(\mathcal{E})$ of $\mathcal{E}$ as a subsheaf of $\mathcal{D}.$
\end{lemma}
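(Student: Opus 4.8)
The plan is to argue by contradiction, producing the torsion of $\ker\alpha$ as a destabilizing sub-object. First I would reduce the statement to an injectivity assertion. Writing $\mathcal{T}=T(\mathcal{E})$ for the torsion subsheaf of $\mathcal{E}$, the torsion subsheaf of $\ker\alpha$ is exactly $\ker\alpha\cap\mathcal{T}=\ker(\alpha|_{\mathcal{T}})$, because the torsion of a subsheaf is its intersection with the ambient torsion. Hence ``$\ker\alpha$ is torsion-free'' is equivalent to ``$\alpha|_{\mathcal{T}}\colon\mathcal{T}\to\mathcal{D}$ is injective,'' which is precisely the assertion that $\alpha$ embeds $T(\mathcal{E})$ into $\mathcal{D}$. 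So it suffices to show $\mathcal{K}:=\ker(\alpha|_{\mathcal{T}})=0$.

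Suppose instead $\mathcal{K}\neq0$. Then $\mathcal{K}\subseteq\mathcal{E}$ is a nonzero torsion subsheaf (so of rank $0$) whose induced framing $\alpha|_{\mathcal{K}}$ vanishes, giving $\epsilon(\alpha|_{\mathcal{K}})=0$ and $P_{(\mathcal{K},\alpha|_{\mathcal{K}})}=P_{\mathcal{K}}$, a nonzero polynomial with positive leading coefficient. To extract a contradiction with semi-stability without dividing by the rank $0$ of $\mathcal{K}$, I would pass to the complementary quotient $\mathcal{E}'':=\mathcal{E}/\mathcal{K}$, which has full rank $r$. Since $\mathcal{K}\subseteq\ker\alpha$, the framing factors as $\alpha=\alpha''\circ q$ through $q\colon\mathcal{E}\to\mathcal{E}''$, whence $\epsilon(\alpha'')=\epsilon(\alpha)$ and the $\delta$-contributions match. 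Additivity of the framed Hilbert polynomial then gives $P_{(\mathcal{E}'',\alpha'')}=P_{(\mathcal{E},\alpha)}-P_{\mathcal{K}}$, and dividing by $r$ yields $p_{(\mathcal{E}'',\alpha'')}=p_{(\mathcal{E},\alpha)}-\tfrac{1}{r}P_{\mathcal{K}}$. As $P_{\mathcal{K}}\neq0$ is eventually positive, this forces $p_{(\mathcal{E}'',\alpha'')}<p_{(\mathcal{E},\alpha)}$ in the natural ordering of polynomials, contradicting the quotient form of semi-stability ($p_{(\mathcal{E},\alpha)}\leq p_{(\mathcal{E}'',\alpha'')}$ for every quotient), which is equivalent to the subsheaf form by the additivity identity already recorded. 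Thus $\mathcal{K}=0$.

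The main obstacle is exactly the rank-zero nature of the destabilizing torsion piece: one cannot feed $\mathcal{K}$ directly into the inequality $p_{(\mathcal{E}',\alpha')}\leq p_{(\mathcal{E},\alpha)}$, since the reduced Hilbert polynomial $p_{(\mathcal{K},\alpha|_{\mathcal{K}})}=P_{\mathcal{K}}/r'$ is undefined for $r'=0$. The device of replacing the rank-$0$ sub-object by its full-rank complementary quotient $\mathcal{E}''$ is what makes the comparison well posed, and the only points needing care are that the framing on $\mathcal{E}''$ is the genuine induced one (so that $\epsilon(\alpha'')=\epsilon(\alpha)$ and the $\delta$-terms cancel) and that $\mathcal{K}\subseteq\ker\alpha$ really lets $\alpha$ descend to $\mathcal{E}''$. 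Once these are checked, the positivity of $P_{\mathcal{K}}$ delivers the strict inequality that breaks semi-stability.
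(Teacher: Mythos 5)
Your proof is correct and takes essentially the same route as the paper's: both single out the torsion part of $\ker\alpha$ as a rank-zero subsheaf with vanishing induced framing and conclude that its Hilbert polynomial must be $\leq 0$, hence zero. Your device of passing to the full-rank quotient $\mathcal{E}/\mathcal{K}$ is just the cross-multiplied form $r\,P_{(\mathcal{K},0)}\leq 0\cdot P_{(\mathcal{E},\alpha)}$ of the subsheaf inequality, which is exactly how the paper reads the definition for rank-zero subsheaves (``the inequality reads $rP_{T(\mathcal{E})}\leq 0$'').
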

\begin{proof}
If $T(\mathcal{E})$ is the torsion part of the kernel of $\alpha,$ then in the inequality of the definition above $r'=0$ and $\alpha'=0,$ so that $P_{(T(\mathcal{E}),\alpha)}=P_{T(\mathcal{E})}$ and the inequality reads $rP_{T(\mathcal{E})}\leq0,$ which implies $T(\mathcal{E})=0.$
\end{proof}

\begin{lemma}
Suppose $\mathcal{E}$ is a torsion sheaf. If $(\mathcal{E}, \alpha)$ is semi-stable, then it is stable, which in turn is equivalent to the assertion that $\alpha$ is injective and $P_{\mathcal{E}}=\delta.$
\end{lemma}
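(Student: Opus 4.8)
The plan is to establish the three conditions equivalent in a cycle: (c) $\alpha$ is injective and $P_{\mathcal{E}}=\delta$; (b) $(\mathcal{E},\alpha)$ is stable; (a) $(\mathcal{E},\alpha)$ is semi-stable. Since (b)$\Rightarrow$(a) is immediate from the definition, the real content is (c)$\Rightarrow$(b), together with (a)$\Rightarrow$(c); the latter, in view of the previous lemma, splits into deducing injectivity of $\alpha$ and then the polynomial identity $P_{\mathcal{E}}=\delta$.

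First I would extract injectivity from semi-stability. By the preceding lemma, if $(\mathcal{E},\alpha)$ is semi-stable then $\ker\alpha$ is torsion-free. But $\ker\alpha$ is a subsheaf of the torsion sheaf $\mathcal{E}$, hence itself torsion, and a sheaf that is simultaneously torsion and torsion-free is zero; thus $\ker\alpha=0$ and $\alpha$ is injective. Two bookkeeping consequences follow and will be used throughout: $\epsilon(\alpha)=1$, so $P_{(\mathcal{E},\alpha)}=P_{\mathcal{E}}-\delta$; and for every nonzero subsheaf $\mathcal{E}'\subseteq\mathcal{E}$ the restriction $\alpha'=\alpha|_{\mathcal{E}'}$ is again nonzero, whence $\epsilon(\alpha')=1$, $P_{(\mathcal{E}',\alpha')}=P_{\mathcal{E}'}-\delta$, and the induced framing $\alpha''$ on the quotient $\mathcal{E}''=\mathcal{E}/\mathcal{E}'$ vanishes, so $P_{(\mathcal{E}'',\alpha'')}=P_{\mathcal{E}''}$.

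For (c)$\Rightarrow$(b), assume $\alpha$ injective and $P_{\mathcal{E}}=\delta$, and let $\mathcal{E}'\subsetneq\mathcal{E}$ be a nonzero proper subsheaf with nonzero quotient $\mathcal{E}''$. Using the previous paragraph and $P_{\mathcal{E}'}=P_{\mathcal{E}}-P_{\mathcal{E}''}$ I compute
$$P_{(\mathcal{E}',\alpha')}=P_{\mathcal{E}'}-\delta=P_{\mathcal{E}'}-P_{\mathcal{E}}=-P_{\mathcal{E}''},$$
while $P_{(\mathcal{E},\alpha)}=P_{\mathcal{E}}-\delta=0$. As $\mathcal{E}''\neq0$, its Hilbert polynomial has positive leading coefficient, so $P_{(\mathcal{E}',\alpha')}=-P_{\mathcal{E}''}<0=P_{(\mathcal{E},\alpha)}$ for $n\gg0$. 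This is the strict form of the defining inequality for every proper subsheaf, so $(\mathcal{E},\alpha)$ is stable. Note that the very same computation, once $P_{\mathcal{E}}=\delta$ is known, upgrades any semi-stable such pair to a stable one, which is precisely the implication (a)$\Rightarrow$(b) asserted in the statement.

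It remains to prove the heart of the matter, that semi-stability forces $P_{\mathcal{E}}=\delta$, equivalently $P_{(\mathcal{E},\alpha)}=0$, and this is where I expect the main obstacle. The difficulty is that the reduced Hilbert polynomial $p_{(\mathcal{E},\alpha)}=P_{(\mathcal{E},\alpha)}/r$ is the indeterminate $0/0$ for a torsion sheaf, exactly the degeneracy already encountered in the proof of the first lemma, where the comparison could only be read after clearing denominators. The plan is to run the stability test with the rank replaced by the leading coefficient (multiplicity) of the Hilbert polynomial, and to evaluate it on the trivial sub- and quotient-objects: the zero subsheaf should give $P_{(\mathcal{E},\alpha)}\geq0$ for $n\gg0$, while the zero quotient should give $P_{(\mathcal{E},\alpha)}\leq0$ for $n\gg0$, and together these squeeze $P_{(\mathcal{E},\alpha)}=0$. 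Making this rigorous — selecting the correct normalization for torsion framed sheaves and verifying that the two degenerate comparisons are genuine instances of the semi-stability condition rather than vacuous ones — is the delicate step; once it is secured, the cycle (c)$\Rightarrow$(b)$\Rightarrow$(a)$\Rightarrow$(c) closes and the lemma follows.
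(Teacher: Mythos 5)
Your proposal is correct and complete on two of the three implications, and on those it is actually more detailed than the paper's own proof: the deduction of injectivity of $\alpha$ from the previous lemma (the kernel is torsion-free, but as a subsheaf of a torsion sheaf it is also torsion, hence zero) and the computation showing that $\alpha$ injective plus $P_{\mathcal{E}}=\delta$ forces $P_{(\mathcal{E}',\alpha')}=-P_{\mathcal{E}''}<0=P_{(\mathcal{E},\alpha)}$ for every proper nonzero subsheaf are both sound, and neither appears in the paper, whose entire proof is the single sentence ``Semi-stability for a non-trivial torsion sheaf requires $P_{(\mathcal{E},\alpha)}=P_{\mathcal{E}}-\delta=0$.''

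The gap is that the one step you explicitly defer --- semi-stability forces $P_{\mathcal{E}}=\delta$ --- is precisely the one assertion the paper's proof consists of, and your write-up stops at a plan (``once it is secured, the cycle closes'') rather than securing it. You have correctly diagnosed why it is delicate: the paper's definition compares $P_{(\mathcal{E}',\alpha')}/r'$ with $P_{(\mathcal{E},\alpha)}/r$, and after clearing denominators as in the proof of the first lemma the condition becomes $r\,P_{(\mathcal{E}',\alpha')}\leq r'\,P_{(\mathcal{E},\alpha)}$, which for $r=r'=0$ reads $0\leq 0$ and is vacuous --- so on the paper's stated definitions the lemma cannot be derived at all. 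What is needed, and what you should state rather than gesture at, is the Huybrechts--Lehn convention for the degenerate case: when both ranks vanish, the inequality $p_{(\mathcal{E}',\alpha')}\,(\leq)\,p_{(\mathcal{E},\alpha)}$ is to be read as $P_{(\mathcal{E}',\alpha')}\,(\leq)\,0\,(\leq)\,P_{(\mathcal{E},\alpha)}$. With that convention in hand the argument closes in two lines: taking $\mathcal{E}'=\mathcal{E}$ gives $P_{(\mathcal{E},\alpha)}\leq 0\leq P_{(\mathcal{E},\alpha)}$, hence $P_{\mathcal{E}}=\epsilon(\alpha)\delta$, and since $P_{\mathcal{E}}\neq 0$ this forces $\alpha\neq 0$ and $P_{\mathcal{E}}=\delta$; taking $\mathcal{E}'\subseteq\ker\alpha$ gives $P_{\mathcal{E}'}\leq 0$, hence $\ker\alpha=0$, recovering injectivity independently of the previous lemma. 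Your remaining computations then show all the inequalities for proper nonzero subsheaves are strict, so semi-stability upgrades to stability. In short: right skeleton, right diagnosis of the crux, but the crux itself is left unproven --- and it happens to be the only thing the paper proves.
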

\begin{proof}
Semi-stability for a non-trivial torsion sheaf requires $$P_{(\mathcal{E},\alpha)}=P_{(\mathcal{E})}-\delta=0.$$
\end{proof}

Now we look at homomorphism of framed sheaves:
\begin{definition}
A homomorphism $\phi:(\mathcal{E},\alpha)\to(\mathcal{E}',\alpha')$ of framed sheaves is a homomorphism of the underlying sheaves $\phi:\mathcal{E}\to\mathcal{E}'$ which preserves the framing up to a homothety, that is $\exists\lambda\in k$ such that $\alpha'=\lambda\circ\alpha'.$
\end{definition}

\begin{lemma}
The set $\Hom((\mathcal{E},\alpha),(\mathcal{E}',\alpha'))$ of homomorphisms of framed sheaves is a linear subspace of $\Hom(\mathcal{E},\mathcal{E}').$ If $\phi:(\mathcal{E},\alpha)\to(\mathcal{E}',\alpha')$ is an isomorphism, then the factor $\lambda$ in the definition above can be taken in $k^{\ast}.$ In particular, the isomorphism $\phi_{0}:=\lambda^{-1}\phi$ satisfies $\alpha'\circ\phi_{0}=\alpha.$
\end{lemma}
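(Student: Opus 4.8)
The plan is to work throughout with the compatibility relation that stands behind the definition of a framed homomorphism, namely that an element $\phi\in\Hom((\mathcal{E},\alpha),(\mathcal{E}',\alpha'))$ is a sheaf homomorphism $\phi\colon\mathcal{E}\to\mathcal{E}'$ for which there exists $\lambda\in k$ with $\alpha'\circ\phi=\lambda\,\alpha$; this is the reading forced by the final assertion $\alpha'\circ\phi_0=\alpha$. All three claims then reduce to elementary manipulations of this identity, so no deep input is needed beyond the bilinearity of composition and the fact that $\psi$ is a two-sided inverse.

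For the first claim I would check directly that the subset is stable under the vector-space operations. The zero map lies in it, since $\alpha'\circ 0=0=0\cdot\alpha$ realizes the condition with $\lambda=0$. If $\phi_1,\phi_2$ carry scalars $\lambda_1,\lambda_2$ and $c\in k$, then bilinearity of composition gives $\alpha'\circ(\phi_1+c\phi_2)=(\lambda_1+c\lambda_2)\,\alpha$, so $\phi_1+c\phi_2$ again satisfies the defining relation with scalar $\lambda_1+c\lambda_2$. Hence $\Hom((\mathcal{E},\alpha),(\mathcal{E}',\alpha'))$ is a linear subspace of $\Hom(\mathcal{E},\mathcal{E}')$; note in passing that whenever $\alpha\neq0$ the scalar $\lambda$ is uniquely determined by $\phi$, and the assignment $\phi\mapsto\lambda$ is itself linear.

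For the second and third claims, let $\phi$ be an isomorphism of framed sheaves with inverse $\psi:=\phi^{-1}$, which is again a framed homomorphism, so that $\alpha\circ\psi=\mu\,\alpha'$ for some $\mu\in k$. The key computation is $\alpha=\alpha\circ\psi\circ\phi=\mu\,(\alpha'\circ\phi)=\mu\lambda\,\alpha$, whence $(1-\mu\lambda)\alpha=0$, and symmetrically $(1-\lambda\mu)\alpha'=0$. If at least one of $\alpha,\alpha'$ is nonzero this forces $\mu\lambda=1$, so $\lambda\in k^{\ast}$. In the degenerate case $\alpha=\alpha'=0$ the defining relation holds for every scalar, so one is free to take $\lambda=1\in k^{\ast}$. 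In either case $\lambda$ may be taken in $k^{\ast}$, and setting $\phi_0:=\lambda^{-1}\phi$---still a sheaf isomorphism, being a nonzero scalar multiple of one---yields $\alpha'\circ\phi_0=\lambda^{-1}(\alpha'\circ\phi)=\lambda^{-1}\lambda\,\alpha=\alpha$.

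The only genuine subtlety, and the step I would watch most carefully, is the degenerate case where both framings vanish: there $\lambda$ is not determined by $\phi$, so the phrase ``can be taken in $k^{\ast}$'' must be read existentially, and one simply chooses $\lambda=1$. Everywhere else the argument is the cancellation $\alpha\circ\psi\circ\phi=\alpha$, which relies essentially on $\psi$ being a \emph{framed} inverse (not merely a sheaf inverse), together with the observation that a nonzero scalar multiple of a sheaf isomorphism is again a sheaf isomorphism.
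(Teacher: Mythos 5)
Your argument is correct and follows essentially the same route as the paper: compose the defining relation for $\phi$ with that for its framed inverse to get $\lambda\mu\,\alpha'=\alpha'$ (equivalently $\mu\lambda\,\alpha=\alpha$) and conclude $\lambda\in k^{\ast}$. You are slightly more careful than the paper in two respects — you write out the easy linear-subspace check and you handle the degenerate case $\alpha=\alpha'=0$ explicitly, where $\lambda$ is not determined and must simply be \emph{chosen} to be $1$ — but these are refinements of, not departures from, the paper's proof.
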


\begin{proof}
The first part of the lemma is very easy to check. If $\phi$ is an isomorphism then $(\alpha'\circ\phi)\circ\phi^{-1}=\lambda\cdot(\alpha\circ\phi^{-1})=\lambda\cdot(\lambda'\alpha)=\alpha.$ Hence $\lambda'=\lambda^{-1}$ and consequently $\lambda$ is invertible. 

\end{proof}

\begin{lemma}
If $(\mathcal{E},\alpha)$ and $(\mathcal{E}',\alpha')$ are stable with the same reduce Hilbert polynomial $p,$ then any nontrivial homomorphism $\phi:(\mathcal{E},\alpha)\to(\mathcal{E}',\alpha')$ is an isomorphism. Moreover, in this case $\Hom((\mathcal{E},\alpha),(\mathcal{E}',\alpha'))\cong k.$ If in addition $\alpha\neq0,$ or equivalently $\alpha'\neq0,$ then there is a unique isomorphism $\phi_{0}$ with $\alpha'\circ\phi_{0}=\alpha.$
\end{lemma}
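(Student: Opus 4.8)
The plan is to mimic the classical Schur-type argument for stable sheaves, carrying the framing along at each step. First I would prove that a nontrivial $\phi$ is an isomorphism. Set $K=\ker\phi$, $I=\im\phi\cong\mathcal{E}/K$ and $C=\coker\phi$, giving short exact sequences $0\to K\to\mathcal{E}\to I\to 0$ and $0\to I\to\mathcal{E}'\to C\to 0$. Each term inherits a framing: $\alpha$ restricts to $\alpha|_K$ and induces a framing $\beta$ on $I\cong\mathcal{E}/K$, while $\alpha'$ restricts to $\alpha'|_I$ and induces one on $C$. Using $\alpha'\circ\phi=\lambda\alpha$ together with the surjection $\mathcal{E}\twoheadrightarrow I$, one checks the compatibility $\alpha'|_I=\lambda\beta$ when $\alpha|_K=0$, and $\lambda=0$, $\alpha'|_I=0$ when $\alpha|_K\neq 0$; in every case $\beta=0$ forces $\alpha'|_I=0$, so that $\epsilon(\beta)\ge\epsilon(\alpha'|_I)$. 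Since $\delta$ has positive leading term this yields $p_{(I,\alpha'|_I)}\ge p_{(I,\beta)}$.

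Then I would feed this into the stability inequalities. Additivity of the pair Hilbert polynomial, after dividing by ranks, exhibits $p=p_{(\mathcal{E},\alpha)}$ as a convex combination of $p_{(K,\alpha|_K)}$ and $p_{(I,\beta)}$; so if $K\neq 0$ stability of $(\mathcal{E},\alpha)$ gives $p_{(K,\alpha|_K)}<p$ and hence $p_{(I,\beta)}>p$. Dually, if $C\neq 0$ stability of $(\mathcal{E}',\alpha')$ gives $p_{(I,\alpha'|_I)}<p$. Combining with $p_{(I,\alpha'|_I)}\ge p_{(I,\beta)}$ produces the chain $p>p_{(I,\alpha'|_I)}\ge p_{(I,\beta)}>p$, a contradiction; thus $K$ and $C$ cannot both be nonzero. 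Running the two boundary cases separately, namely $K=0,\ C\neq 0$ (where $\beta=\alpha$ and $p_{(I,\beta)}=p$) and $C=0,\ K\neq 0$ (where $\alpha'|_I=\alpha'$ and $p_{(I,\alpha'|_I)}=p$), each again collides with a strict stability inequality, forcing $K=0$ and $C=0$ simultaneously. Hence $\phi$ is an isomorphism of sheaves, and by the preceding lemma it is an isomorphism of framed pairs.

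For the computation of the Hom space I would argue that $H:=\Hom((\mathcal{E},\alpha),(\mathcal{E}',\alpha'))$, a finite-dimensional $k$-subspace of $\Hom(\mathcal{E},\mathcal{E}')$, is at most one-dimensional. Given two nonzero $\phi,\psi\in H$, both are isomorphisms by the first part, so $\eta:=\psi^{-1}\circ\phi\in\End(\mathcal{E})$. As $\End(\mathcal{E})$ is a finite-dimensional $k$-algebra and $k$ is algebraically closed, the minimal polynomial of $\eta$ has a root $\mu\in k$, whence $\eta-\mu\cdot\id$ is a zero divisor and in particular not invertible. Then $\phi-\mu\psi=\psi\circ(\eta-\mu\cdot\id)$ lies in $H$ but is not an isomorphism, so by the first part it must vanish, giving $\phi=\mu\psi$. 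Thus $H\cong k$. Finally, when $\alpha\neq0$, existence of $\phi_0$ with $\alpha'\circ\phi_0=\alpha$ is the rescaling already provided by the preceding lemma, and uniqueness follows since any competitor is $c\phi_0$ with $c\in k$, and $\alpha=\alpha'\circ(c\phi_0)=c\alpha$ forces $c=1$; the equivalence $\alpha\neq0\Leftrightarrow\alpha'\neq0$ is immediate from surjectivity of $\phi_0$.

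The main obstacle is the first part, specifically the framing bookkeeping: one must track how the induced framings $\beta$ and $\alpha'|_I$ on the common sheaf $I$ interact with the $\epsilon(\cdot)\delta$ correction in the pair polynomials, and verify that $\epsilon(\beta)\ge\epsilon(\alpha'|_I)$ holds in every case, including $\lambda=0$. A secondary subtlety is the rank-zero situation, where $p_{(\,\cdot\,)}$ is not literally defined by division by the rank; there one compares Hilbert polynomials directly by their eventual ordering, using the torsion lemma to control the torsion of $\mathcal{E}$ and $\mathcal{E}'$.
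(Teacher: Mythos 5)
Your proposal is correct and follows essentially the same route as the paper: both arguments hinge on the image of $\phi$ carrying two induced framings (as a quotient of $\mathcal{E}$ and as a subsheaf of $\mathcal{E}'$), the observation that the quotient framing vanishing forces the subsheaf framing to vanish, and the resulting chain of inequalities sandwiching the pair polynomial of the image between $\mathrm{rk}\,p$ on both sides, after which a Schur-type argument (the paper uses an eigenvalue on a fibre, you use the minimal polynomial in $\End(\mathcal{E})$ — an immaterial difference) gives $\Hom\cong k$ and the normalization of $\phi_{0}$.
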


\begin{proof}
Suppose $\phi:(\mathcal{E},\alpha)\to(\mathcal{E}',\alpha')$ is non-trivial. The image $\mathcal{F}:=im\phi$ inherits framings $\beta$ and $\beta'$ when considered as a quotient of $\mathcal{E}$ and as a submodule of $\mathcal{E}',$ respectively. If $\beta'\neq0$ then $\beta\neq0$ and $\beta'=\lambda\beta$ for some $\lambda\neq0.$ In any case, one has $$rk(\mathcal{F})p\leq P_{(\mathcal{F},\beta)}\leq P_{(\mathcal{F},\beta')}\leq rk( {\mathcal{F}})p.$$ Therefore equality holds at all places. By the stability assumption one has: $\mathcal{E}\cong\mathcal{F}\cong\mathcal{E}',$ $\alpha=\beta\circ\phi,$ $\beta'=\alpha',$ and $\beta$ and $\beta'$ differ by a nontrivial factor. 
Indeed, $\mathcal{F}\to\mathcal{E}'$ is injective as $(\mathcal{F}, \beta')$ is a subpair of $(\mathcal{E}',\alpha').$ Since  
$(\mathcal{E}',\alpha')$ is stable then $\mathcal{F}\to\mathcal{E}'$ must also be surjective, otherwise we would have a subpair with the same reduced Hilbert polynomial $p,$ therefore contradicting stability hypothesis, or else, $\beta'=0,$ in opposition to our assumption. A similar argument holds for the surjective morphism $\mathcal{E}\to\mathcal{F}'.$

Hence $\phi$ is an isomorphism of framed sheaves. In order to prove the last statement it is enough to show that $\aut(\mathcal{E},\alpha)=k\cdot id_{\mathcal{E}}.$ Suppose $\phi$ is an automorphism of $(\mathcal{E},\alpha).$ Choose a point $x\in supp(\mathcal{E})$ and let $\mu$ be an eigenvalue of $\phi$ restricted to the fibre $\mathcal{E}(x).$ Then $\phi-\mu\cdot id_{\mathcal{E}}$ is not surjective at $x$ and hence not an isomorphism, which implies $\phi-\mu\cdot id_{\mathcal{E}}=0.$
\end{proof}

The above lemma implies that in particular a stable framed sheaf is simple in the sense that its nontrivial endomorphisms are homotheties.

\begin{lemma}
If $\deg(\delta)\geq d,$ then in any semi-stable framed sheaf $(\mathcal{E},\alpha)$ the framing $\alpha$ is injective or zero. Conversely, if $\alpha$ is the inclusion homomorphism of a submodule $\mathcal{E}$ of $\mathcal{D}$ of positive rank, then $(\mathcal{E},\alpha)$ is stable.
\end{lemma}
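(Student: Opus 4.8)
The plan is to reduce both assertions to a comparison of the leading coefficients of the relevant Hilbert polynomials, exploiting that on a $d$-dimensional $X$ a coherent sheaf of rank $s$ has $P(n)=\frac{sg}{d!}n^d+O(n^{d-1})$ (torsion contributes only in degrees $\le d-1$), while $\delta$ has degree $\ge d$ and positive leading term. Throughout I read the (semi-)stability condition in its cross-multiplied form $r\,P_{(\mathcal{E}',\alpha')}\,(\le)\,r'\,P_{(\mathcal{E},\alpha)}$ — the form that still makes sense when $r'=0$, as in the torsion-freeness lemma above — and I compare two polynomials by the sign of the leading coefficient of their difference, equivalently by their values for $n\gg0$.

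For the first statement, suppose $(\mathcal{E},\alpha)$ is semi-stable with $\alpha\neq0$ and assume for contradiction that $\ker\alpha\neq0$. By the lemma on torsion-freeness of the kernel, $\ker\alpha$ is torsion-free, hence of some positive rank $r'\ge1$. Its induced framing is the zero map, so $P_{(\ker\alpha,\,0)}=P_{\ker\alpha}$, and semi-stability gives $r\,P_{\ker\alpha}\le r'(P_{\mathcal{E}}-\delta)$, that is $r\,P_{\ker\alpha}-r'\,P_{\mathcal{E}}\le -r'\delta$. Here I would compare leading terms: the degree-$d$ coefficients of $r\,P_{\ker\alpha}$ and $r'\,P_{\mathcal{E}}$ both equal $\frac{r r' g}{d!}$ and cancel, so the left-hand side has degree at most $d-1$; since $\deg\delta\ge d$ and $\delta$ has positive leading term, the right-hand side $-r'\delta$ tends to $-\infty$ with degree $\ge d$ and is eventually strictly smaller than the left, contradicting the inequality. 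Hence $\ker\alpha=0$ and $\alpha$ is injective (and it may of course also be identically zero).

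For the converse, let $\mathcal{E}\subseteq\mathcal{D}$ with $\alpha$ the inclusion and $\mathcal{E}$ of positive rank $r$, and let $0\neq\mathcal{E}'\subsetneq\mathcal{E}$ be a nonzero proper subsheaf of rank $r'$. The crucial observation is that the induced framing $\alpha'=\alpha|_{\mathcal{E}'}$ is again an inclusion into $\mathcal{D}$, hence nonzero, so $\epsilon(\alpha')=1$ and both pairs carry the $-\delta$ term. The stability inequality to establish then reads $r\,P_{\mathcal{E}'}-r'\,P_{\mathcal{E}}<(r-r')\delta$, and I would split into cases on the rank. If $r'<r$, the degree-$d$ parts of $r\,P_{\mathcal{E}'}$ and $r'\,P_{\mathcal{E}}$ again cancel, leaving a polynomial of degree $\le d-1$ on the left, whereas $(r-r')\delta$ has degree $\ge d$ with strictly positive leading coefficient, so the inequality holds for $n\gg0$. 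If $r'=r$ the right-hand side vanishes and the claim reduces to $P_{\mathcal{E}'}<P_{\mathcal{E}}$, which holds because $\mathcal{E}/\mathcal{E}'$ is then a nonzero torsion sheaf and so has strictly positive Hilbert polynomial for large $n$. Finally, a nonzero torsion subsheaf ($r'=0$) still has $\alpha'\neq0$, so the condition becomes $P_{\mathcal{E}'}<\delta$, which again holds since $\deg P_{\mathcal{E}'}\le d-1<\deg\delta$. In every case the inequality is strict, so $(\mathcal{E},\alpha)$ is stable.

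The main obstacle, and the reason the hypothesis $\deg\delta\ge d$ is indispensable, is precisely the cancellation of the degree-$d$ leading terms in $r\,P_{\mathcal{E}'}-r'\,P_{\mathcal{E}}$: once these vanish, the sign of the comparison is decided entirely by the polynomial $\delta$, and only $\deg\delta\ge d$ guarantees that $\delta$ dominates the remaining degree-$(\le d-1)$ discrepancy. The other delicate point is the careful bookkeeping of $\epsilon(\alpha')$ in each case — deciding whether the induced framing on a sub- or quotient sheaf vanishes — since that is what determines whether $\delta$ is subtracted; it is here that one must invoke the convention on induced framings recalled just before the stability definition.
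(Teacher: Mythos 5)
Your proof is correct and follows essentially the same route as the paper's: both directions reduce to the observation that the degree-$d$ leading terms of $r\,P_{\mathcal{E}'}$ and $r'\,P_{\mathcal{E}}$ cancel, so a parameter polynomial $\delta$ of degree $\geq d$ dominates whatever remains and forces the desired (in)equalities for $n\gg0$. If anything, your converse is more careful than the paper's: you check the cross-multiplied inequality for every nonzero proper subsheaf, treating the rank-$0$, intermediate-rank, and full-rank cases separately, whereas the paper argues via the quotient pair and is loose with the rank normalization (its claim $\deg(P_{\mathcal{E}}-P_{K})<d$ only holds when $K$ has full rank).
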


\begin{proof}
Suppose that $(\mathcal{E},\alpha)$ is semi-stable framed sheaf with $\deg(\delta)\geq d,$ and that the framing $\alpha:\mathcal{E}\to\mathcal{D}$ is not injective. We have the following

$$\xymatrix{ K \ar[rd]_{\beta=0}\ar[r]& \mathcal{E}\ar[d]^{\alpha}\ar[r]^{\alpha}& \mathcal{D} \ar@{=}[d] \\ & \mathcal{D}\ar@{=}[r]&\mathcal{D} &}$$ 

where $K=\ker{\alpha}$ is torsion-free and the induced framing on $K$ is $\beta=0$ and consequently $\epsilon(\beta)=0.$ By the semi-stability of $(\mathcal{E},\alpha)$ we have that $P_{(K,0)}=P_{K}\leq P_{(\mathcal{E},\alpha)}=P_{\mathcal{E}}-\delta,$ thus $$\deg(\delta)\leq \deg(P_{\mathcal{E}}-P_{K})<d,$$ which is absurd.

For the second part of the lemma, we assume $\alpha$ is injective and there is a destabilizing pair $(K,\beta).$ Then one has the following commutative diagram

$$\xymatrix{ 0 \ar[r] & K \ar[d]_{0\neq\beta} \ar[r] & \mathcal{E} \ar[d]^{\alpha} \ar[r] & \mathcal{E} / K \ar[d]^{0} \ar[r] & 0 \\ 0 \ar[r] & \mathcal{D} \ar@{=}[r] & \mathcal{D} \ar[r]^{0} & \mathcal{D} \ar[r] & 0}$$  

The resulting quotient pair $( \mathcal{E} / K, \alpha'' )$ is such that $\alpha''=0;$ this is because $\mathcal{E} / K$ is a subsheaf of $ \mathcal{D} / K,$ since $\alpha$ is injective. Moreover, the Hilbert polynomial of the quotient pair $( \mathcal{E} / K, \alpha'' )$ satisfies $$P_{( \mathcal{E} / K, \alpha'' )}=P_{\mathcal{E} / K}\leq P_{(\mathcal{E},\alpha)}=P_{\mathcal{E}}-\delta.$$
Hence, $$\deg(\delta)\leq \deg(P_{\mathcal{E}}-P_{K})<d,$$ again, contradicting the hypothesis, and we are done

\end{proof}

The above lemma shows that the study of semi-stable framed sheaves reduces to the study of subsheaves of $\mathcal{D}$ if the degree of the polynomial $\delta$ is larger or equal to the dimension of the variety $X.$ Moreover if we assume that the Hilbert polynomial $P$ is a fixed integer, then the sheaves in question parameterize the Grothendieck $\Quot$ scheme $\Quot_{X}(\mathcal{D},P).$ For this reason we impose that the the polynomial $\delta$ has degree less than $d$ and write: 
\begin{equation}\label{delta-poly}
    \delta(m)=\delta_{1}\frac{m^{d-1}}{(d-1)!}+\delta_{2}\frac{m^{d-2}}{(d-2)!}+\cdots+\delta_{d},
\end{equation}
the first non-zero coefficient being positive.

In \cite{Huy}, the authors also introduce the notion of a slope for a framed sheaf $(\mathcal{E},\alpha)$ as $\mu(\mathcal{E},\alpha):=\frac{\deg\mathcal{E}-\epsilon(\alpha)\delta_{1}}{rk(\mathcal{E})},$ with respect to the first non vanishing coefficient of the parameter polynomial $\delta.$ This allows to introduce

\begin{definition}
A framed sheaf $(\mathcal{E},\alpha)$ of positive rank is said to be $\mu$-(semi-) stable with respect to $\delta_{1},$ if it has torsion-free kernel and if for all subsheaves $\mathcal{E}'$ with induced framing $\alpha'$ and rank $r'$ satisfying $0<r'<r,$ one has 
$$\mu(\mathcal{E}',\alpha')(\leq)\mu(\mathcal{E},\alpha)$$

\end{definition}

Remark that we have the following sequence of implications $$\xymatrix@C-1pc@R-1pc{*\txt{$\mu$-stability\hspace{0.1cm}} \ar@{=>}[rr]\ar@{=>}[d]& &*\txt{\hspace{0.1cm}$\mu$-semi-stability} \\ *\txt{stability\hspace{0.1cm}}\ar@{=>}[rr] & & *\txt{\hspace{0.1cm}semi-stability}\ar@{=>}[u]}$$

\bigskip\bigskip

\begin{lemma}
Let $\mathcal{F}\subset\mathcal{G}\subset\mathcal{E}$ be coherent sheaves and $\alpha$ a framing of $\mathcal{E}.$ Then the framing induced on $\mathcal{G}/\mathcal{F}$ as a quotient of $\mathcal{G}$ and as a submodule of $\mathcal{E}/\mathcal{G}$ agree.
\end{lemma}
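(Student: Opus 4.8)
The plan is to reduce the statement to the single case distinction built into the definition of the induced framing on a quotient, namely whether the restricted framing vanishes. First I would fix notation, writing $\alpha_{\mathcal{F}}=\alpha|_{\mathcal{F}}$ and $\alpha_{\mathcal{G}}=\alpha|_{\mathcal{G}}$ for the framings obtained by restriction on the submodules $\mathcal{F}\subset\mathcal{G}\subset\mathcal{E}$, and record the obvious compatibility $\alpha_{\mathcal{F}}=\alpha_{\mathcal{G}}|_{\mathcal{F}}$. Reading $\mathcal{G}/\mathcal{F}$ as the natural subsheaf of $\mathcal{E}/\mathcal{F}$ (the only way the second framing is defined), I note that both candidate framings on $\mathcal{G}/\mathcal{F}$ are controlled by one and the same quantity $\epsilon(\alpha_{\mathcal{F}})$: the quotient framing coming from $(\mathcal{G},\alpha_{\mathcal{G}})$ is governed by the vanishing of $\alpha_{\mathcal{G}}|_{\mathcal{F}}=\alpha_{\mathcal{F}}$, while the framing on $\mathcal{E}/\mathcal{F}$, whose restriction defines the submodule framing, is governed by the vanishing of $\alpha|_{\mathcal{F}}=\alpha_{\mathcal{F}}$ since $\mathcal{F}$ is precisely the kernel of $\mathcal{E}\twoheadrightarrow\mathcal{E}/\mathcal{F}$.

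With this in hand the proof splits into two cases. If $\alpha_{\mathcal{F}}\neq 0$, both recipes return the zero homomorphism: as a quotient of $\mathcal{G}$ the induced framing is zero by definition, because its kernel framing $\alpha_{\mathcal{F}}$ is nonzero; and as a submodule of $\mathcal{E}/\mathcal{F}$ the ambient framing on $\mathcal{E}/\mathcal{F}$ is itself zero for the same reason, so its restriction to $\mathcal{G}/\mathcal{F}$ vanishes as well. If instead $\alpha_{\mathcal{F}}=0$, then $\alpha_{\mathcal{G}}$ vanishes on $\mathcal{F}$ and therefore factors uniquely through $\mathcal{G}/\mathcal{F}$, giving a genuine homomorphism to $\mathcal{D}$; likewise $\alpha$ factors through $\mathcal{E}/\mathcal{F}$, and I would finish with a one-line chase on local sections, lifting a section $\overline{g}$ of $\mathcal{G}/\mathcal{F}$ to $g\in\mathcal{G}\subset\mathcal{E}$ and observing that both maps send $\overline{g}$ to $\alpha(g)$, the value being independent of the lift precisely because $\alpha$ kills $\mathcal{F}$. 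Equivalently, both framings are the unique factorization of $\alpha_{\mathcal{G}}$ through the projection $\mathcal{G}\twoheadrightarrow\mathcal{G}/\mathcal{F}$, hence they agree.

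There is no deep obstacle here; the entire content is the bookkeeping of the case distinction, and the step I would treat most carefully is verifying that the vanishing/non-vanishing dichotomy used to form the quotient framing from $\mathcal{G}$ coincides with the dichotomy that decides whether $\mathcal{E}/\mathcal{F}$ carries the zero framing. This is exactly the identity $\alpha_{\mathcal{F}}=\alpha_{\mathcal{G}}|_{\mathcal{F}}$ noted at the outset, which guarantees the two constructions can never fall into opposite cases, so once it is recorded both cases close at once.
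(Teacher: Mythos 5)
Your proof is correct and follows essentially the same route as the paper's: a case distinction on whether $\alpha|_{\mathcal{F}}$ vanishes, noting that this single dichotomy governs both the quotient framing from $\mathcal{G}$ and the framing on $\mathcal{E}/\mathcal{F}$ (whose restriction gives the submodule framing), so the two constructions agree in either case. Your explicit section-lifting argument in the nonvanishing-free case and your silent correction of the statement's typo ($\mathcal{E}/\mathcal{G}$ should read $\mathcal{E}/\mathcal{F}$) only make the argument cleaner than the paper's diagram-based phrasing.
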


\begin{proof}
We consider the following commutative diagram:

$$\xymatrix@C-1pc@R-1pc{& \mathcal{F}\ar@{=}[dd]\ar[rr]\ar[dr] && \mathcal{G}\ar[dd]\ar[rr]\ar[dr]^{\beta} && \mathcal{G}/\mathcal{F}\ar[dd]\ar[rd]&& \\
&&\mathcal{D}\ar[dd]\ar@{-->}[rr] && \mathcal{D}\ar[dd]\ar@{-->}[rr] && \mathcal{D}\ar[dd] & \\
& \mathcal{F} \ar[rr] \ar[dr]_{\alpha|_{\mathcal{F}}} && \mathcal{E}\ar[dr]_{\alpha}\ar[rr] && \mathcal{E}/\mathcal{F} \ar[rd] && \\
&& \mathcal{D}\ar@{-->}[rr] && \mathcal{D}\ar@{-->}[rr] && \mathcal{D} & 
}
$$
where $\beta:=\alpha|_{\mathcal{G}}.$ Suppose that $\alpha|_{\mathcal{F}}=0,$ then all the framings in the square 
$$
\xymatrix@C-1pc@R-1pc{ \mathcal{G}\ar[d]\ar[r] &  \mathcal{G}/\mathcal{F}\ar[d] \\ \mathcal{E}\ar[r] &  \mathcal{E}/\mathcal{F} }
$$ are induced by the framing on $\mathcal{E}/\mathcal{F}.$ On the other hand, if $\alpha|_{\mathcal{F}}\neq0$ then, the induced framings on $\mathcal{G}/\mathcal{F}$ and $\mathcal{E}/\mathcal{F}$ are both zero. So there is no equivocation.

\end{proof}
From this lemma it follows that subsheaves of a framed sheaf inherit naturally a framing.

\begin{proposition}
Let $(\mathcal{E},\alpha)$ be a semi-stable framed sheaf with reduced Hilbert polynomial $p.$ Then there is a filtration $$\mathcal{E}_{\bullet}\qquad0=\mathcal{E}_{0}\subset\mathcal{E}_{1}\subset\cdots\subset\mathcal{E}_{s}=\mathcal{E}$$ such that all the factors $gr_{i}(\mathcal{E}_{\bullet}):=\mathcal{E}_{i}/\mathcal{E}_{i-1}$ together with the induced framing $\alpha_{i}$ are stable with respect to $\delta$ with reduced polynomial $p.$ Any such filtration is called a Jordan-H\"older filtration of $(\mathcal{E},\alpha)$ The framed sheaf $$(gr(\mathcal{E}),gr(\alpha)):=\oplus_{i}(gr_{i}(\mathcal{E}_{\bullet}),\alpha_{i})$$ does not depend on the choice of the Jordan-H\"older filtration.
\end{proposition}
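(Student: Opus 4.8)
The plan is to transpose the classical argument for existence and uniqueness of Jordan--Hölder filtrations of semi-stable sheaves to the framed setting, the three inputs being the additivity $P_{(\mathcal{E},\alpha)}=P_{(\mathcal{E}',\alpha')}+P_{(\mathcal{E}'',\alpha'')}$, the compatibility of induced framings on iterated subquotients established in the preceding lemma, and the Schur-type lemma asserting that a nonzero homomorphism between stable framed sheaves of the same reduced Hilbert polynomial is an isomorphism (so that stable framed sheaves are simple). A preliminary observation I would record first is that any framed subsheaf $(\mathcal{E}',\alpha')\subset(\mathcal{E},\alpha)$ with $p_{(\mathcal{E}',\alpha')}=p$ is itself semi-stable: every subsheaf $\mathcal{F}$ of $\mathcal{E}'$ is a subsheaf of $\mathcal{E}$ carrying the same restricted framing, so semi-stability of $(\mathcal{E},\alpha)$ yields $p_{(\mathcal{F},\beta)}\le p=p_{(\mathcal{E}',\alpha')}$.

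For existence I would argue by induction on $rk(\mathcal{E})$. If $(\mathcal{E},\alpha)$ is already stable the filtration $0\subset\mathcal{E}$ works. Otherwise the family of nonzero proper framed subsheaves with reduced Hilbert polynomial $p$ is nonempty, and since all of them are subsheaves of the fixed sheaf $\mathcal{E}$ this family is bounded, so I may choose a member $(\mathcal{E}_1,\alpha_1)$ minimal first in rank and then in sheaf Hilbert polynomial. Minimality forces $(\mathcal{E}_1,\alpha_1)$ to be stable: a proper subpair of strictly smaller rank and reduced polynomial $p$ would contradict minimality of the rank, while a proper subpair $(\mathcal{F},\beta)$ of the same rank and reduced polynomial $p$ would force, by additivity, $P_{(\mathcal{E}_1/\mathcal{F},\gamma)}=0$ for the induced framing $\gamma$ on the torsion quotient, which either makes $\mathcal{E}_1/\mathcal{F}$ vanish against properness or lowers the sheaf Hilbert polynomial of $\mathcal{F}$ below that of $\mathcal{E}_1$, contradicting minimality in the second parameter. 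The quotient $(\mathcal{E}/\mathcal{E}_1,\bar\alpha)$ is then semi-stable of reduced Hilbert polynomial $p$ and strictly smaller rank, so the induction hypothesis supplies a Jordan--Hölder filtration of it; pulling it back along $\mathcal{E}\to\mathcal{E}/\mathcal{E}_1$ and prepending $\mathcal{E}_1$ gives the desired filtration, the induced framings on the factors being well defined by the compatibility lemma.

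For uniqueness I would induct on the length $s$, comparing a given filtration $\mathcal{E}_\bullet$ with a second Jordan--Hölder filtration $\mathcal{E}'_\bullet$ of $(\mathcal{E},\alpha)$. Let $j$ be minimal with $\mathcal{E}_1\subseteq\mathcal{E}'_j$; by minimality the composite $\mathcal{E}_1\hookrightarrow\mathcal{E}'_j\to gr_j(\mathcal{E}'_\bullet)$ is a nonzero homomorphism of framed sheaves between stable objects of the same reduced polynomial $p$, hence an isomorphism by the Schur-type lemma, so $\mathcal{E}'_j=\mathcal{E}'_{j-1}\oplus\mathcal{E}_1$ inside $\mathcal{E}$ and $gr_1(\mathcal{E}_\bullet)=\mathcal{E}_1\cong gr_j(\mathcal{E}'_\bullet)$. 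Passing to $(\mathcal{E}/\mathcal{E}_1,\bar\alpha)$, the filtration $\mathcal{E}_\bullet$ descends to a Jordan--Hölder filtration with factors $gr_2(\mathcal{E}_\bullet),\dots,gr_s(\mathcal{E}_\bullet)$, while the images of the $\mathcal{E}'_i$ descend to one with factors $gr_1(\mathcal{E}'_\bullet),\dots,\widehat{gr_j(\mathcal{E}'_\bullet)},\dots,gr_t(\mathcal{E}'_\bullet)$; the induction hypothesis identifies these two graded objects, and combined with $gr_1(\mathcal{E}_\bullet)\cong gr_j(\mathcal{E}'_\bullet)$ it gives $(gr(\mathcal{E}),gr(\alpha))\cong\bigoplus_i(gr_i(\mathcal{E}'_\bullet),\alpha'_i)$.

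I expect the main obstacle to be the bookkeeping of framings throughout the uniqueness step rather than any deep new idea: one must check at each stage that the framing induced on a descended factor, such as the image of $\mathcal{E}'_i$ in $\mathcal{E}/\mathcal{E}_1$, agrees with the framing it carries in the original filtration, and that the splitting $\mathcal{E}'_j=\mathcal{E}'_{j-1}\oplus\mathcal{E}_1$ is compatible with $\alpha$; both are exactly the content of the preceding compatibility lemma together with a careful tracking of the function $\epsilon$. The Schur-type lemma is the engine that forces the stable factors to match up, and once the framings are shown to transport correctly the classical combinatorial argument carries over essentially verbatim.
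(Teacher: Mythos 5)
The paper does not actually prove this proposition --- it delegates it entirely to \cite[Proposition 1.13]{Huy1} --- and your argument is in substance a correct reconstruction of the proof given there: the classical Jordan--H\"older existence-and-uniqueness scheme transported to framed modules by means of the additivity of framed Hilbert polynomials, the compatibility lemma for framings on iterated subquotients, and the framed Schur lemma. Your handling of the uniqueness step is sound; in particular the composite $\mathcal{E}_1\hookrightarrow\mathcal{E}'_j\to gr_j(\mathcal{E}'_\bullet)$ is indeed a morphism of framed pairs even when the induced framing on $gr_j(\mathcal{E}'_\bullet)$ vanishes, because the definition of a framed homomorphism permits $\lambda=0$.

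The one step that needs a small patch is the induction on $\mathrm{rk}(\mathcal{E})$ in the existence part. A destabilizing subpair of minimal rank may have rank $0$: a nonzero torsion subsheaf $\mathcal{F}$ with injective induced framing and $P_{\mathcal{F}}=\delta$ satisfies the equality case of semi-stability in its cross-multiplied form, and is then a legitimate (stable) Jordan--H\"older factor. In that situation $\mathrm{rk}(\mathcal{E}/\mathcal{F})=\mathrm{rk}(\mathcal{E})$ and your induction parameter does not decrease. The fix is routine: such a torsion factor necessarily carries the nonzero framing, so the induced framing on the quotient is zero and no further rank-$0$ factor with reduced polynomial $p$ can appear; hence the torsion piece occurs at most once and only as the first step, and one can either split it off before starting the rank induction or induct on the pair $(\mathrm{rk}(\mathcal{E}),P_{\mathcal{E}})$ ordered lexicographically. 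With that adjustment your proof goes through and coincides with the argument the paper cites.
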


\begin{proof}
\cite[Proposition 1.13]{Huy1}
\end{proof}

This proposition motivates the following definition

\begin{definition}
Two semi-stable framed sheaves $(\mathcal{E},\alpha)$ and $(\mathcal{E}',\alpha')$ with reduced Hilbert polynomial $p$ are called $S-$equivalent, if their associated graded objects $(gr(\mathcal{E},gr(\alpha)))$ and $(gr(\mathcal{E}',gr(\alpha')))$ are isomorphic.
\end{definition}

\begin{definition}
A  family of framed sheaves parameterized by a noetherian scheme $T$ of finite type is a pair $(\mathfrak{E},\Phi)$ on the product $X\times T$ of a $T-$flat sheaf of $\mathcal{O}_{X\times T}-$modules and a morphism $\Phi:\mathfrak{E}\to p_{X}^{\ast}\mathcal{D}$ of sheaves of $\mathcal{O}_{X\times T}-$modules. Here $p_{X}:X\times T\to X$ is the natural projection on $X.$
\end{definition}

Furthermore, one can define the two functors $\mathfrak{M}_{\delta,P,\mathcal{D}}^{s}(\bullet)$ and $\mathfrak{M}_{\delta,P,\mathcal{D}}^{ss}(\bullet),$ for a fixed sheaf $\mathcal{D}$ and a fixed Hilbert polynomial $P-\epsilon(\alpha)\delta,$ from the category of noetherian schemes of finite type $\mathfrak{S}ch$ to the category of sets $\mathfrak{S}et$ defined as follows:
For a scheme $T\in\obj(\mathfrak{S}ch)$ one assigns the following set:$$\mathfrak{M}_{\delta,P,\mathcal{D}}^{s}(T)=\left\{[\mathfrak{E}, \Phi] \Large{\textbf{/}}\small{ \begin{array}{l} \bullet\hspace{0.2cm} (\mathfrak{E},\Phi) \textrm{ is a family framed sheaf on }X\times T,\hspace{0.1cm}\textrm{flat on } T; \\ \bullet\hspace{0.2cm}\textnormal{The fibre } (\mathfrak{E}\otimes k(t),\Phi_{t}:\mathfrak{E}\otimes k(t)\to\mathcal{D}) \textnormal{ at a point } t\in T \\ \textrm{is a stable framed sheaf on } X \textrm{ with Hilbert polynomial } \\ \hspace{2.5cm}P-\epsilon(\alpha)\delta \end{array}} \right\}$$ and $$\mathfrak{M}_{\delta,P,\mathcal{D}}^{ss}(T)=\left\{[\mathfrak{E}, \Phi] \Large{\textbf{/}}\small{ \begin{array}{l}  \bullet\hspace{0.2cm} (\mathfrak{E},\Phi) \textrm{ is a family framed sheaf on }X\times T,\hspace{0.1cm}\textrm{flat on } T; \\ \bullet\hspace{0.2cm}\textnormal{The fibre } (\mathfrak{E}\otimes k(t),\Phi_{t}:\mathfrak{E}\otimes k(t)\to\mathcal{D}) \textnormal{ at a point } t\in T \\ \textrm{is a semi-stable framed sheaf on } X \textrm{ with Hilbert} \\ \hspace{2cm} \textrm{ polynomial } P-\epsilon(\alpha)\delta \end{array}} \right\}.$$ $[\mathfrak{E},\Phi]$ stands for the class of the framed sheaf $(\mathfrak{E},\Phi)$; $(\mathfrak{F},\Psi)\in[\mathfrak{E},\Phi]$ if there is a line bundle $L$ on $T$ such that $\mathfrak{F}\cong\mathfrak{E}\otimes p_{X}^{\ast}L.$ We also denote by $k(t)$ the residue field of a point $t\in T.$ The framings $\Phi$ and $\Psi$ are compatible since at every point $t\in T$ they differ by an element $\lambda\in p^{\ast}L\otimes k(t)\cong k$. The main result for these objects is

\begin{theorem}\cite[Theorem 0.1]{Huy}\label{main}
Let $\delta\in\mathbb{Q}[m]$ be a polynomial with positive leading coefficient and of degree $\deg(\delta)<dim(X).$ Then there is a projective scheme $\mathcal{M}_{\delta,P,\mathcal{D}}^{ss}$ which is a coarse moduli space for the functor $\mathfrak{M}_{\delta,P,\mathcal{D}}^{ss}.$ Moreover there is an open subscheme $\mathcal{M}_{\delta,P,\mathcal{D}}^{s}$ which represent the functor $\mathfrak{M}_{\delta,P,\mathcal{D}}^{s}.$ A closed point in $\mathcal{M}_{\delta,P,\mathcal{D}}^{ss}$ represents an $S-$equivalence class of semi-stable framed sheaf.
\end{theorem}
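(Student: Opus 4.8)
The plan is to reduce the statement to Mumford's geometric invariant theory, following the framed adaptation of the Gieseker--Maruyama construction due to Huybrechts and Lehn \cite{Huy1,Huy2}. The first step is \emph{boundedness}: one shows that the family of semi-stable framed sheaves $(\mathcal{E},\alpha)$ on $X$ with fixed Hilbert polynomial $P$ is bounded. The stability inequality $p_{(\mathcal{E}',\alpha')}\leq p_{(\mathcal{E},\alpha)}$ controls the reduced Hilbert polynomials of all saturated subsheaves from above; since $\deg(\delta)<d$ the correction term $\epsilon(\alpha)\delta$ only affects lower-order coefficients, so a Grothendieck--Kleiman type estimate bounds the slopes of subsheaves and hence pins down a finite set of possible Harder--Narasimhan types. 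Boundedness yields an integer $m_{0}$ such that for every semi-stable $(\mathcal{E},\alpha)$ and every $m\geq m_{0}$ the twist $\mathcal{E}(m)$ is $m$-regular: it is globally generated, $\ho^{i}(\mathcal{E}(m))=0$ for $i>0$, and $\dim\ho^{0}(\mathcal{E}(m))=P(m)$.

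Fixing such an $m$ and a vector space $V=k^{P(m)}$, the second step constructs a parameter scheme. Let $Q=\Quot_{X}(V\otimes\mathcal{O}_{X}(-m),P)$ be the Quot scheme of quotients $q\colon V\otimes\mathcal{O}_{X}(-m)\twoheadrightarrow\mathcal{E}$, with universal quotient $\tilde{\mathcal{E}}$. Over $Q$ one forms the linear scheme $R$ whose fibre over $[q]$ parametrizes framings $\alpha\colon\mathcal{E}\to\mathcal{D}$, realized via the relative $\mathcal{H}om(\tilde{\mathcal{E}},p_{X}^{\ast}\mathcal{D})$; a point of $R$ is then a pair $(q,\alpha)$, i.e.\ a framed sheaf together with a choice of basis of $\ho^{0}(\mathcal{E}(m))$. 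The group $\mathrm{GL}(V)$ acts on $R$ by changing this basis, and two points lie in the same orbit---after accounting for the scaling of $\alpha$ that is built into the homothety equivalence of framed morphisms---exactly when the underlying framed sheaves are isomorphic. To compactify and linearize, I embed $R$ equivariantly into a product of a Grassmannian of quotients of $V\otimes\ho^{0}(\mathcal{O}_{X}(m'-m))$ and a projective space projectivizing the framing, and choose an ample $\mathrm{SL}(V)$-linearization $L$ depending on $\delta$.

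The heart of the argument, and the main obstacle, is matching GIT (semi-)stability with framed (semi-)stability. Using the Hilbert--Mumford criterion, one analyzes one-parameter subgroups $\lambda$ of $\mathrm{SL}(V)$: a weighted filtration of $V$ induces, for $m\gg 0$, a filtration of $\mathcal{E}$ by saturated subsheaves, and the Mumford weight $\mu^{L}(\lambda)$ is computed to be, up to a positive factor, a nonnegative combination of the framed-stability defects $p_{(\mathcal{E},\alpha)}-p_{(\mathcal{E}',\alpha')}$ of the subsheaves that appear. The delicacy is that the contribution of the framing must be weighted correctly: the part of $L$ coming from the projective factor must be scaled by $\delta$ so that the term $\epsilon(\alpha)\delta$ reappears in $\mu^{L}(\lambda)$. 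The hypothesis $\deg(\delta)<d$ is exactly what guarantees that this framing contribution sits at the correct subleading order relative to the Quot-scheme contribution, so that GIT-destabilization forces a genuine destabilizing subpair and conversely. Establishing this equivalence for all sufficiently large $m$, uniformly over the bounded family, is the technical core.

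Granting the equivalence, the conclusion follows from standard GIT. The open locus $R^{ss}$ of GIT-semistable points admits a good quotient $\mathcal{M}_{\delta,P,\mathcal{D}}^{ss}=R^{ss}/\!\!/\,\mathrm{SL}(V)$, which is projective because the ambient GIT quotient is projective and properness of the moduli problem can be verified by a Langton-type semistable reduction adapted to pairs. On the stable locus $R^{s}$ the stabilizers are trivial in $\mathrm{PGL}(V)$, since by the earlier lemma a stable framed sheaf has only homotheties as automorphisms; hence $R^{s}\to\mathcal{M}_{\delta,P,\mathcal{D}}^{s}$ is a geometric quotient and a principal bundle, the universal family descends, and $\mathcal{M}_{\delta,P,\mathcal{D}}^{s}$ represents $\mathfrak{M}_{\delta,P,\mathcal{D}}^{s}$. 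Finally, closed orbits in $R^{ss}$ correspond to polystable framed sheaves, that is, direct sums of stable framed sheaves of the same reduced Hilbert polynomial $p$; by the Jordan--H\"older proposition proved above these are precisely the representatives of $S$-equivalence classes, so the closed points of $\mathcal{M}_{\delta,P,\mathcal{D}}^{ss}$ parametrize $S$-equivalence classes of semi-stable framed sheaves.
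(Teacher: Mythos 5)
The paper gives no proof of this theorem; it is quoted verbatim from Huybrechts--Lehn \cite{Huy} and used as a black box. Your outline is a faithful reconstruction of the GIT proof in that reference (boundedness, Quot-scheme parameter space with a framing factor, Hilbert--Mumford comparison of GIT and framed stability, descent on the stable locus), so it matches the intended argument; just note that the comparison step you flag as the technical core is indeed where all the work lies and is only asserted, not carried out, in your sketch.
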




\section{Framed pairs on $\bp3$:}\label{framedp3}


\subsection{Instantons on $\bp3$}

Let $\p3$ be the projective $3$-space, over the field of complex numbers $\mathbb{C},$ and fix a point $p_{0}$ in it. We will denote by $\bp3$ be the blow-up of $\p3$ at $p_{0},$ and by $\pi:\bp3\to\p3$ the blow-down map. Let $H,$ be the pull-back of the cohomology class of a hyperplane in $\ho^{2}(\p3,\mathbb{Z}),$ and set $E$ for the cohomology class of the exceptional divisor in $\ho^{2}(\bp3,\mathbb{Z}),$ then the Picard group of $\bp3$ can be written as ${\rm Pic}(\bp3)=H\mathbb{Z}\oplus E\mathbb{Z},$ 
and one can show that the Chow ring of $\bp3$ is isomorphic to the ring $$A^{\ast}(\bp3)=\frac{\mathbb{Z}[E,H]}{<E\cdot H, E^{3}-H^{3}>}.$$

For a divisor $D=a\cdot H+b\cdot E$ we associate the the line bundle $\mathcal{O}(D)=\mathcal{O}(a,b):=\mathcal{O}(a\cdot H)\otimes\mathcal{O}(b\cdot E).$ In particular, the canonical bundle, associated to the canonical divisor ${\rm K}_{\bp3}=-4H+2E,$ is denoted by $\omega_{\bp3}=\mathcal{O}(-4,2).$

As in \cite{henni2021} we shall adopt the following:

\begin{definition}\label{Instanton-def}
A rank $r$ torsion-free sheaf $\mathcal{F},$ on $\bp3,$ will be called an instanton sheaf, if it is $\mu_{L}$-semi-stable, for the polarization $L=\mathcal{O}(2,-1),$ has first Chern class $c_{1}(\mathcal{F})=0$ and satisfies:
\begin{itemize}
    \item[(i)] $\ho^{0}(\mathcal{F})=\ho^{3}(\mathcal{F}(-2,1))=0;$ 
    \item[(ii)] $\ho^{1}(\mathcal{F}(-2,1))=\ho^{2}(\mathcal{F}(-2,1))=0;$
    \item[(iii)] $\ho^{2}(\mathcal{F}(0,-1))=\ho^{2}(\mathcal{F}(-1,1))=0.$
\end{itemize}
\end{definition}

On $\bp3$, the definition of a rank $2$ locally-free instanton was given in \cite{ccgm} as generalization a mathematical instanton \cite{OS}, on $\p3,$ to the case of a fano $3$-fold with polycyclic Picard group. Previous notions of an instanton bundle (of rank $2$) for fano $3$-folds with Picard number $1$ has been introduced by Faenzi \cite{faenzi} and Kuznetsov \cite{kuznetsov}. The definition above has been given in \cite{henni2021} in order to include torsion-free instantons and higher rank analogues. We remind the reader that examples of such instanton bundles were obtained in \cite[Construction 7.2]{ccgm}, via Hartshorne-Serre correspondence, from a union of disjoint lines.  We shall call them \emph{t'Hooft instantons} on $\bp3,$ since they generalize the t'Hooft instantons on $\p3.$ The non-locally-free t'Hooft instantons are those obtained by elementary transformations from t'Hooft instanton bundles \cite[Section 4]{henni2021}. The charge of the instanton is given by degree of the second Chern class $c_{2}(\mathcal{E})=(k\cdot H^{2}+l\cdot E^{2})$, with respect to a chosen polarization and, in our case, it reads $$\deg_{L}(c_{2}(\mathcal{E}))=c_{2}(\mathcal{E})\cdot(2H-E)=(k\cdot H^{2}+l\cdot E^{2})\cdot(2H-E)=2k+l,$$ where $k,l$ are non negative integers and the product is taken in the Chow ring of $\bp3.$ We also recall the following results from \cite[Section 5 ]{henni2021}:
\begin{lemma}\label{stab-rest-H} Let $\mathcal{E}$ be an instanton bundle, on $\bp3,$ whose restriction $\mathcal{E}|_{E}$ to the exceptional divisor is $\mu$-semi-stable, then its restriction, $\mathcal{E}|_{S},$ to the generic element $S\in|H|,$ in $\bp3,$ is also $\mu$-semi-stable.
In particular, the above assertion is true for every t'Hooft instanton bundle on $\bp3.$ 
\end{lemma}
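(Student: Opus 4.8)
The plan is to exploit the second extremal contraction of $\bp3$. Besides the blow-down $\pi:\bp3\to\p3$, projection away from $p_{0}$ resolves to a morphism $q:\bp3\to\p2$ realizing $\bp3\cong\mathbb{P}(\mathcal{O}_{\p2}\oplus\mathcal{O}_{\p2}(1))$ as a $\mathbb{P}^{1}$-bundle in which the exceptional divisor $E$ is a section. The crucial observation is that a \emph{generic} $S\in|H|$ is also a section of $q$: if $\Pi\subset\p3$ is a hyperplane with $p_{0}\notin\Pi$, then $S=\pi^{\ast}\Pi$ is disjoint from $E$ and $q|_{S}:S\to\p2$ is the isomorphism given by linear projection. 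Writing $\zeta=q^{\ast}\mathcal{O}_{\p2}(1)=\mathcal{O}(1,-1)$, one checks that $\zeta|_{E}$ and $\zeta|_{S}$ are the hyperplane classes of $E\cong\p2$ and $S\cong\p2$, while $L=\mathcal{O}(2,-1)=\mathcal{O}(1,0)\otimes\zeta$ restricts to $\mathcal{O}_{E}(1)$ and $\mathcal{O}_{S}(2)$. Since $c_{1}(\mathcal{E})=0$ gives $c_{1}(\mathcal{E}|_{E})=c_{1}(\mathcal{E}|_{S})=0$, the $\mu$-semistability of either restriction is measured against the hyperplane class $\zeta$ on the common base $\p2$.

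First I would compare the two restrictions through $q$. Because $L\cdot f=1>0$ for the fibre class $f$ of $q$ and $\mathcal{E}$ is $\mu_{L}$-semistable, a Grauert--M\"ulich argument for the covering family of fibres shows that the splitting type of $\mathcal{E}$ on the general fibre is balanced, hence $\mathcal{E}|_{f}\cong\mathcal{O}_{\mathbb{P}^{1}}^{\oplus r}$ (the degree being $c_{1}(\mathcal{E})\cdot f=0$). Let $U\subseteq\p2$ be the open locus over which this holds and set $Z=\p2\setminus U$. Over $q^{-1}(U)$ the evaluation map $q^{\ast}q_{\ast}\mathcal{E}\to\mathcal{E}$ is an isomorphism, so restricting to the two sections and identifying $E\cong\p2\cong S$ via $q$ yields a canonical isomorphism $\mathcal{E}|_{E}\cong\mathcal{E}|_{S}$ over $U$, compatible with $\zeta$.

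Then I would argue by contradiction. If $\mathcal{E}|_{S}$ is not $\mu$-semistable, choose a saturated subsheaf $\mathcal{G}\subset\mathcal{E}|_{S}$ with $\deg_{\zeta}\mathcal{G}>0$. Transport $\mathcal{G}|_{U}$ across the isomorphism above and extend it to a subsheaf $\widetilde{\mathcal{G}}\subset\mathcal{E}|_{E}$ agreeing with it over $U$; provided $\codim_{\p2}Z\geq2$, the degree with respect to $\zeta$ can be computed on a general line meeting only $U$, so $\deg_{\zeta}\widetilde{\mathcal{G}}=\deg_{\zeta}\mathcal{G}>0$, contradicting the $\mu$-semistability of $\mathcal{E}|_{E}$. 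The final assertion then follows at once, since $\mathcal{E}|_{E}$ is $\mu$-semistable for the t'Hooft instanton bundles.

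The main obstacle is precisely the codimension estimate $\codim_{\p2}Z\geq2$: a priori the splitting type of a semistable bundle along a $\mathbb{P}^{1}$-fibration may jump along a divisor, and if $Z$ were a curve the degrees of $\mathcal{G}$ and $\widetilde{\mathcal{G}}$ could differ, breaking the comparison. Controlling $Z$ is where the instanton vanishing conditions of Definition~\ref{Instanton-def} (notably (iii)) must enter: one analyzes the jumping locus via the direct images $R^{i}q_{\ast}\big(\mathcal{E}\otimes\mathcal{O}(-1,0)\big)$, i.e.\ the locus where $\ho^{0}\big(f,\mathcal{E}|_{f}\otimes\mathcal{O}(-1,0)\big)\neq0$, and uses those vanishings, pulled apart by the Leray spectral sequence for $q$, to force it into codimension $\geq2$.
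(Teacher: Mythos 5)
Your proposal does not close the argument: the whole comparison between $\mathcal{E}|_{E}$ and $\mathcal{E}|_{S}$ rests on the codimension estimate $\codim_{\p2}Z\geq 2$ for the jumping locus of $\mathcal{E}$ along the fibres of $q$, and while you correctly flag this as the crux, you leave it unproved. Worse, the estimate is false in the relevant generality: the fibres of $q$ are the strict transforms of the lines through $p_{0}$, and for an instanton of positive charge the jumping fibres in this two-dimensional family typically sweep out a \emph{divisor} in $\p2$ (already for a t'Hooft bundle built from a union of lines $X=\sqcup X_{i}$, every fibre meeting some $X_{i}$ is a jumping fibre, and these form the curves $q(X_{i})\subset\p2$). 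No analysis of $R^{i}q_{\ast}(\mathcal{E}(-1,0))$ via Leray and the vanishings of Definition~\ref{Instanton-def} can force this locus into codimension $2$, so the transported subsheaf $\widetilde{\mathcal{G}}$ may genuinely change degree across $Z$ (the two restrictions differ by an elementary transformation along $Z$), and the contradiction you aim for does not materialize.

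The paper's proof is entirely different and avoids this issue: since $S\cong\p2$ and $c_{1}(\mathcal{E}|_{S})=0$, by the Hoppe criterion it suffices to show $\ho^{0}(\mathcal{E}|_{S}(-1))=0$; the restriction sequence for $S$ twisted by $\mathcal{E}(-1,0)$ reduces this (using $\ho^{0}(\mathcal{E}(-1,0))=0$) to the vanishing of $\ho^{1}(\mathcal{E}(-2,0))$, and the restriction sequence for $E$ gives an injection $\ho^{1}(\mathcal{E}(-2,0))\hookrightarrow\ho^{1}(\mathcal{E}(-2,1))$ because $\ho^{0}(\mathcal{E}|_{E}(-1))=0$ by the semistability hypothesis on $\mathcal{E}|_{E}$; the target then vanishes by the instanton condition. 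If you wish to rescue your fibration approach you would need an explicit control of the elementary transformation along the jumping divisor, which is substantially harder than the two-line cohomological argument actually required.
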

\begin{proof}
By considering the restriction sequence for $S$ twisted by $\mathcal{E}(-1,0)$ and using the fact that $E$ is $\mu_{L}$-stable, one gets the following exact sequence in cohomology:
$$0\to\ho^{0}(\mathcal{E}|_{S}(-1))\to\ho^{1}(\mathcal{E}(-2,0))\to\ho^{1}(\mathcal{E}(-1,0))\to\ho^{1}(\mathcal{E}|_{S}(-1))\to0.$$
$S$ is isomorphic to a plane in $\p3,$ and by the Hoppe criteria \cite[Ch.II, Lemma 1.2.5]{oss} it suffices to show that the utmost left term in the sequence above is zero. This is done by considering the sequence 
$$0\to\mathcal{E}(-2,0)\to\mathcal{E}(-2,1)\to\mathcal{E}|_{E}(-1)\to0.$$ Since $\mathcal{E}|_{E}$ is $\mu$-semi-stable \cite[\S 6]{ccgm} then $\ho^{0}(\mathcal{E}|_{E}(-1))$ and the map $$\ho^{1}(\mathcal{E}(-2,0))\to\ho^{1}(\mathcal{E}(-2,1))$$  is injective. Moreover, the instantonic condition $\ho^{1}\mathcal{E}(-2,1))=0$ implies that $\ho^{1}(\mathcal{E}(-2,0))$ is trivial and hence $\ho^{0}(\mathcal{E}|_{S}(-1))=0.$
 The last assertion follows from the fact that every t'Hooft instanton bundle is trivial on the the generic line, contained in the exceptional divisor \cite[Theorem 1.7]{ccgm}.  
\end{proof}

\begin{corollary}\label{triv-split-bdle}
An instanton bundle $\mathcal{E},$ on $\bp3,$ whose restriction $\mathcal{E}|_{E}$ to the exceptional divisor is $\mu$-semi-stable, is of trivial splitting type on the generic conic $C$ given by the pull-back of a generic line in $\p3.$
\end{corollary}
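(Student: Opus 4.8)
The plan is to reduce the statement to a single cohomological vanishing on the generic conic and then feed in the instanton conditions through the $\mu$-semistability of the restriction to a plane, which Lemma \ref{stab-rest-H} already provides.

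First I would fix the geometry. A generic line $\ell\subset\p3$ avoids the center $p_{0}$, so $\pi$ is an isomorphism over $\ell$ and $C=\pi^{-1}(\ell)\cong\p1$ is disjoint from the exceptional divisor $E$; moreover $C$ is the scheme-theoretic intersection $S\cap S'$ of two generic members of $|H|$. Since $E\cdot H=0$ in the Chow ring, every twist $\mathcal{O}(a,b)$ restricts to $C$ as $\mathcal{O}_{C}(a)\cong\mathcal{O}_{\p1}(a)$; in particular the $E$-twist is trivial on $C$. Writing $\mathcal{E}|_{C}\cong\bigoplus_{i=1}^{r}\mathcal{O}_{\p1}(a_{i})$ and using $c_{1}(\mathcal{E})=0$ together with $E\cdot C=0$ gives $\sum_{i}a_{i}=\deg(\mathcal{E}|_{C})=0$. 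The next step is to reduce \emph{trivial splitting type} to the single vanishing $\ho^{0}(\mathcal{E}|_{C}(-1,0))=0$: indeed $\ho^{0}(\mathcal{E}|_{C}(-1,0))=\bigoplus_{i}\ho^{0}(\mathcal{O}_{\p1}(a_{i}-1))$ vanishes exactly when every $a_{i}\le0$, which together with $\sum_{i}a_{i}=0$ forces all $a_{i}=0$, i.e. $\mathcal{E}|_{C}\cong\mathcal{O}_{C}^{\oplus r}$. Note that $\chi(\mathcal{E}|_{C}(-1,0))=\sum_{i}a_{i}=0$ on $\p1$, so the corresponding $\ho^{1}$ vanishes automatically once $\ho^{0}$ does.

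To obtain that vanishing I would restrict through a generic plane $S\in|H|$, which is again disjoint from $E$ (so $\mathcal{O}(0,b)|_{S}$ is trivial and the $E$-twists align with the instanton twists). By Lemma \ref{stab-rest-H} the sheaf $\mathcal{E}|_{S}$ is $\mu$-semistable on $S\cong\p2$ with $c_{1}=0$, and its proof already yields $\ho^{0}(\mathcal{E}|_{S}(-1,0))=0$. The ideal sequence of the line $C\subset S$ twisted by $\mathcal{E}|_{S}(-1,0)$, namely $0\to\mathcal{E}|_{S}(-2,0)\to\mathcal{E}|_{S}(-1,0)\to\mathcal{E}|_{C}(-1,0)\to0$, then embeds $\ho^{0}(\mathcal{E}|_{C}(-1,0))$ into $\ho^{1}(\mathcal{E}|_{S}(-2,0))$. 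I would kill the latter by pulling the instanton conditions (ii)--(iii) back through the restriction sequence $0\to\mathcal{E}(-3,0)\to\mathcal{E}(-2,0)\to\mathcal{E}|_{S}(-2,0)\to0$ on $\bp3$, rewriting $(-2,0)$ as $(-2,1)$ on $S$ (since $S\cap E=\emptyset$) and using the further vanishing properties of instanton sheaves recalled from \cite{henni2021} to handle the remaining $\ho^{2}$ term.

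The main obstacle is exactly this last vanishing in higher rank. In rank $2$ it is immediate: Grauert--M\"ulich \cite{oss} applied to the $\mu$-semistable $\mathcal{E}|_{S}$ forces the generic line splitting type $(a_{1},a_{2})$ to satisfy $a_{1}-a_{2}\le1$, which with $a_{1}+a_{2}=0$ gives $(0,0)$ at once. For rank $r\ge3$, Grauert--M\"ulich only guarantees a balanced type, and types such as $(1,0,\dots,0,-1)$ are consistent with $\mu$-semistability and $c_{1}=0$; ruling them out genuinely requires the instanton cohomology conditions and not merely semistability. Thus the crux is to show that the connecting map $\ho^{0}(\mathcal{E}|_{C}(-1,0))\to\ho^{1}(\mathcal{E}|_{S}(-2,0))$ has image in a subspace that the conditions (ii)--(iii) annihilate, which is where the special structure of instanton sheaves on $\bp3$ must enter.
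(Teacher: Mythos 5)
Your closing remark in the rank-two case is precisely the paper's proof: the generic conic $C$ lies on a generic $S\in|H|$, Lemma \ref{stab-rest-H} gives the $\mu$-semistability of $\mathcal{E}|_{S}$, and Grauert--M\"ulich forces the splitting type $(a_{1},a_{2})$, which satisfies $a_{1}+a_{2}=0$ and $a_{1}-a_{2}\le 1$, to be $(0,0)$. So the proposal is correct and ultimately takes the same route. The cohomological reduction occupying your first two paragraphs (trivial splitting type reduced to $\ho^{0}(\mathcal{E}|_{C}(-1))=0$, then chased through the ideal sequence of $C\subset S$ and the restriction sequence of $S\subset\bp3$) is an unnecessary detour for this statement, and, as you acknowledge, you do not actually close it: the vanishing of $\ho^{1}(\mathcal{E}|_{S}(-2))$ is left hanging. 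Your caveat that Grauert--M\"ulich only yields a balanced splitting type in rank $\ge 3$ is a fair observation about the limits of the method, but the instanton bundles in play here are those of \cite{ccgm}, which have rank $2$ (and the proof of Lemma \ref{stab-rest-H} already relies on the rank-two Hoppe criterion), so no gap arises for the corollary as the paper intends and uses it.
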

\begin{proof}
This follows from the $\mu$-semi-stability of $\mathcal{E}_{S},$ for a surface  $S\in |H|,$ containing $C,$ and the Grauert -- M\"ulich Thereom \cite[Ch.II, \S2, Corollary 2]{oss}
\end{proof}

\begin{corollary}\label{triv-split-shf}
Let $\mathcal{F}$ be an elementary transform of an instanton bundle $\mathcal{E},$ on $\bp3,$ whose restriction $\mathcal{E}|_{E}$ to the exceptional divisor is $\mu$-semi-stable then it is also of trivial splitting type on the generic conic $C$ given by the pull-back of a generic line in $\p3.$
\end{corollary}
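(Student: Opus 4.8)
The plan is to deduce the statement for the non--locally--free sheaf $\mathcal{F}$ directly from Corollary \ref{triv-split-bdle} for the underlying bundle $\mathcal{E}$, using the elementary fact that an elementary transform agrees with the original bundle away from the center of the transformation. Concretely, following \cite[Section 4]{henni2021}, I would start from the defining short exact sequence
$$0\to\mathcal{F}\to\mathcal{E}\to\iota_{\ast}\mathcal{G}\to0,$$
where $\iota:Z\hookrightarrow\bp3$ is the inclusion of the center $Z$ and $\mathcal{G}$ is a coherent sheaf on $Z$. The point I want to extract is that $\mathcal{F}\to\mathcal{E}$ is an isomorphism over the open set $\bp3\setminus Z$, so that $\mathcal{F}$ and $\mathcal{E}$ have literally the same restriction to any curve that misses $Z$.

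Next I would choose the generic conic carefully. Fix a generic line $\ell\subset\p3$ and set $C=\pi^{-1}(\ell)$, exactly as in Corollary \ref{triv-split-bdle}. Since a generic $\ell$ avoids $p_{0}$, the curve $C$ is the strict transform $\cong\ell\cong\pum$ and is \emph{disjoint} from the exceptional divisor $E$. It then remains to arrange that $C$ also misses the center $Z$. The non--locally--free locus of the torsion--free sheaf $\mathcal{F}$ has codimension at least two in the smooth threefold $\bp3$, so $Z$, and hence its image $\pi(Z)\subset\p3$, is at most a curve; the conics $\pi^{-1}(\ell)$ vary in a four--dimensional family sweeping out $\bp3\setminus E$, whereas those meeting a fixed curve form a subfamily of codimension one (a line meets a fixed curve in $\p3$ only for $\ell$ in a proper subvariety of the Grassmannian). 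Therefore the generic such $C$ is disjoint from $Z$ as well.

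With $C$ disjoint from both $E$ and $Z$, I would restrict the defining sequence to $C$: because $\iota_{\ast}\mathcal{G}$ is supported on $Z$ and $C\cap Z=\varnothing$, both $(\iota_{\ast}\mathcal{G})|_{C}$ and the associated Tor term vanish, whence $\mathcal{F}|_{C}\cong\mathcal{E}|_{C}$ (equivalently, this is just the identification of $\mathcal{F}$ with $\mathcal{E}$ on the neighborhood $\bp3\setminus Z\supseteq C$, which also shows $\mathcal{F}|_{C}$ is locally free). By Corollary \ref{triv-split-bdle} the bundle $\mathcal{E}$ has trivial splitting type on $C$, i.e. $\mathcal{E}|_{C}\cong\mathcal{O}_{C}^{\oplus r}$, and hence $\mathcal{F}|_{C}\cong\mathcal{O}_{C}^{\oplus r}$, which is precisely the assertion.

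I expect the only genuine obstacle to be the genericity step, namely guaranteeing that a conic of the prescribed form $\pi^{-1}(\ell)$ can be chosen disjoint from the center $Z$. When $Z$ is contained in the exceptional divisor $E$, or is a union of lines sitting in the codimension--two non--locally--free locus, this is immediate from the choice $\ell\not\ni p_{0}$ together with the fact that a generic line avoids a fixed curve in $\p3$, and the argument above goes through verbatim. The delicate case, which I would have to treat with the explicit local description of the transformation from \cite[Section 4]{henni2021}, is the one in which the center meets every admissible $C$: there the clean identification $\mathcal{F}|_{C}\cong\mathcal{E}|_{C}$ fails, and one must instead run the full Tor--restriction sequence and argue by semicontinuity that the elementary modification at the finitely many points of $C\cap Z$ does not disturb the balanced (trivial) generic splitting type.
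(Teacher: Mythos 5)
Your argument is essentially identical to the paper's proof: the paper simply observes that the generic conic $C$ misses the curve $\sigma$ along which the elementary transformation is performed, so $\mathcal{F}|_{C}\cong\mathcal{E}|_{C}=\mathcal{O}_{C}^{\oplus 2}$ by Corollary \ref{triv-split-bdle}. The ``delicate case'' you flag at the end does not arise here, since the center of the transformation is a fixed curve and a generic line in $\p3$ avoids it, exactly as in your main argument.
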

\begin{proof}
The generic conic $C$ does not intersect the curve $\sigma,$ along which the elementary transform is realized, so $\mathcal{F}|_{C}\cong\mathcal{E}|_{C}=\mathcal{O}_{C}^{\oplus 2}.$ The last equality follows from Corollary \ref{triv-split-bdle}.   
\end{proof}


\subsection{Moduli of framed instanton pairs}

\begin{definition}
A framed sheaf $\mathfrak{E}:=(\mathcal{E},\alpha)$ will be called  framed instanton sheaf if $\mathcal{E}$ is an instanton sheaf.  
\end{definition}

In the following, we consider {\em framed t'Hooft instanton sheaves} $\mathfrak{E}:=(\mathcal{E},\alpha),$ where $\mathcal{E}$ is t'Hooft instanton sheaf, that is, an instanton corresponding to a union of lines via Hartshorne-Serre correspondence or any elementary transformation of it \cite[\S4]{henni2021}. We fix $\mathcal{E}_{D}$ a rank two torsion-free instanton sheaf on $D\in |\mathcal{O}_{\bp3}(1,0)|$ and let $\alpha:\mathcal{E}\to\mathcal{E}|_{D}=:\mathcal{E}_{D},$ be the framing on the divisor $D\cong\p2.$



\begin{lemma}\label{stab}
    A rank $2$ framed t'Hooft instanton sheaf $\mathfrak{E}=(\mathcal{E},\alpha)$ is $\mu_{L,\delta_{1} }$-stable for some positive $\delta_{1}.$
\end{lemma}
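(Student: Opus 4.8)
The plan is to reduce $\mu_{L,\delta_1}$-stability of the rank $2$ pair $\mathfrak{E}=(\mathcal{E},\alpha)$ to a single degree inequality for rank one subsheaves, exploiting that the kernel of the framing is a negative twist of $\mathcal{E}$. First I would record the numerical data of the pair. Since $\mathcal{E}$ is an instanton sheaf we have $c_1(\mathcal{E})=0$, and as $L=\mathcal{O}(2,-1)$ one computes $L^2=4H^2+E^2$, so from the Chow ring relations $E\cdot H=0$ and $E^3=H^3$ the degree function reads $\deg_L(aH+bE)=4a+b$; in particular $\deg_L\mathcal{E}=0$. The framing $\alpha\colon\mathcal{E}\to\mathcal{E}|_D$ is the restriction map, which is surjective and nonzero, so $\epsilon(\alpha)=1$ and
$$\mu(\mathcal{E},\alpha)=\frac{\deg_L\mathcal{E}-\delta_1}{2}=-\frac{\delta_1}{2}.$$
Because $\operatorname{rk}\mathcal{E}=2$, the only subsheaves relevant to stability have rank $1$, so it suffices to bound $\mu(\mathcal{E}',\alpha')$ for every rank one subsheaf $\mathcal{E}'\subset\mathcal{E}$. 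Note that only the defining properties of an instanton (rank $2$, $c_1=0$, torsion-freeness, $\mu_L$-semistability) will enter, so the t'Hooft hypothesis is not actually needed here.

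The key structural observation I would establish next is that $\ker\alpha=\mathcal{E}(-1,0)$. Indeed, tensoring the ideal sequence $0\to\mathcal{O}(-1,0)\to\mathcal{O}\to\mathcal{O}_D\to0$ of $D\in|\mathcal{O}(1,0)|$ with the torsion-free sheaf $\mathcal{E}$ keeps the first map injective, so the restriction sequence $0\to\mathcal{E}(-1,0)\to\mathcal{E}\to\mathcal{E}|_D\to0$ is exact and identifies $\ker\alpha$ with $\mathcal{E}(-1,0)$; this is torsion-free, which settles the torsion-free-kernel requirement in the definition of $\mu$-stability immediately. Since twisting preserves $\mu_L$-semistability, $\mathcal{E}(-1,0)$ is again $\mu_L$-semistable, with slope $\mu_L(\mathcal{E}(-1,0))=\tfrac{1}{2}\deg_L(-2H)=-4$.

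With these preliminaries I would split into the two cases dictated by the induced framing. If $\alpha'=\alpha|_{\mathcal{E}'}\neq0$, then $\epsilon(\alpha')=1$ and $\mu_L$-semistability of $\mathcal{E}$ gives $\deg_L\mathcal{E}'\le\mu_L(\mathcal{E})=0$, whence $\mu(\mathcal{E}',\alpha')=\deg_L\mathcal{E}'-\delta_1\le-\delta_1<-\delta_1/2$ for every $\delta_1>0$. If instead $\alpha'=0$, then $\mathcal{E}'\subset\ker\alpha=\mathcal{E}(-1,0)$, and $\mu_L$-semistability of the twist forces $\deg_L\mathcal{E}'=\mu_L(\mathcal{E}')\le-4$, so $\mu(\mathcal{E}',\alpha')=\deg_L\mathcal{E}'\le-4<-\delta_1/2$ provided $\delta_1<8$. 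Both inequalities are strict, so choosing any $\delta_1\in(0,8)$ yields $\mu(\mathcal{E}',\alpha')<\mu(\mathcal{E},\alpha)$ for all rank one $\mathcal{E}'$, which proves $\mu_{L,\delta_1}$-stability.

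The main obstacle, and the point where I expect the argument to genuinely need the extra structure, is the vanishing-framing case $\alpha'=0$. Here the bare $\mu_L$-semistability of $\mathcal{E}$ yields only $\deg_L\mathcal{E}'\le0$, which is no better than $\mu(\mathcal{E},\alpha)=-\delta_1/2$ and would fail to give strict stability when $\deg_L\mathcal{E}'=0$. What rescues the estimate is precisely the identification $\ker\alpha=\mathcal{E}(-1,0)$, which drops the slope of any framing-killed subsheaf to $\le-4$; this is why the conclusion holds for an entire interval of small positive $\delta_1$ rather than requiring a delicate choice, and it is the step I would take the most care over.
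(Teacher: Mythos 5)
Your proof is correct and follows essentially the same route as the paper's: the same case split on whether the rank-one subsheaf lies in $\ker\alpha=\mathcal{E}(-1,0)$, the same use of $\mu_L$-semistability to bound $\deg_L\mathcal{E}'$ by $0$ in the framed case and by $-4$ in the unframed case, and the same resulting range $0<\delta_1<8$. Your write-up is in fact slightly cleaner, since you justify the identification $\ker\alpha=\mathcal{E}(-1,0)$ and work directly with $\deg_L$ of a rank-one torsion-free subsheaf rather than asserting it is a line bundle, thereby avoiding the small bookkeeping slip in the paper's case (ii).
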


\begin{proof}
    Let $\mathfrak{F}:=(\mathcal{F}, \beta)$ a framed subsheaf of $\mathfrak{E}=(\mathcal{E},\alpha)$ of rank $1,$ that is the subsheaf $\mathcal{F}\stackrel{f}{\hookrightarrow}\mathcal{E}$ has rank $1.$ $\mathcal{F}$ is obviously torsion-free since $\mathcal{E}$ is. By the smoothness of $\bp3$ it clearly  follows that $\mathcal{F}$ is a line bundle, that is, $\mathcal{F}$ is of the form $\mathcal{O}_{\bp3}(p,q)$ for some $p,q\in\mathbb{Z}.$  In order to prove stability, we shall contemplate two cases
 
    
    
   
 \begin{itemize}
       \item[(i)] If $\mathcal{F}\not\subset\ker{\alpha}:$ 
    The $\mu_{\delta_{1},L}$-stability condition reads $$\mu_{\delta_{1},L}(\mathcal{O}_{\bp3}(p,q))-\delta_{1}=4p+q-\delta_{1}\leq \mu_{\delta_{1},L}(\mathcal{E})-\frac{\delta_{1}}{2}=-\frac{\delta_{1}}{2}.$$
    Hence  \begin{equation}\label{ineq1}4p+q\leq\frac{\delta_{1}}{2}.\end{equation} 
    On the other hand $\mathcal{E}$ is $\mu_{L}$-semi-stable and from the Hoppe criteria one has $4p+q\leq0$ for all line bundles $\mathcal{O}_{\bp3}(p,q)\hookrightarrow\mathcal{E}.$
Thus \eqref{ineq1} is clearly satisfied for all $\delta_{1}>0.$
\vspace{0.5cm}
      \item[(ii)] If $\mathcal{F}\subset\ker{\alpha}=\mathcal{E}(-1,0):$ In this case $\mathcal{O}_{\bp3}(p,q)\hookrightarrow\mathcal{E}(-1,0),$ and equivalently, $$\mathcal{O}_{\bp3}(p+1,q)\hookrightarrow\mathcal{E},$$ The $\mu_{\delta_{1},L}$-stability condition in this case $(\epsilon(\beta)=0)$ reads: 
      $$\mu_{\delta_{1},L}(\mathcal{O}_{\bp3}(p+1,q))=4p+q+4\leq \mu_{\delta_{1},L}(\mathcal{E})-\frac{\delta_{1}}{2}=-\frac{\delta_{1}}{2},$$
    and one has \begin{equation}\label{ineq2}4p+q\leq-\frac{\delta_{1}}{2}-4.\end{equation} 
    Again, by $\mu_{L}$-semi-stability of $\mathcal{E},$ and the Hoppe criteria, it follows that $4p+q\leq-4$ for all line bundles $\mathcal{O}_{\bp3}(p+1,q)\hookrightarrow\mathcal{E},$ and \eqref{ineq2} is satisfied for all $0<\delta_{1}<8.$

From both cases, the result follows for $0<\delta_{1}<8.$
    \end{itemize}
\end{proof}


\begin{proposition}
    Fix a torsion-free instanton sheaf $\mathcal{E}_{D}$ of rank $2$ on $D (\cong\p2)\in |\mathcal{O}_{\bp3}(1,0)|$ and let $c=r-[k\cdot H^{2}+l\cdot E^{2}]\in\ho^{\ast}(\bp3,\mathbb{Q}).$ Then there exists a quasi-projective scheme $\mathfrak{U}_{\delta, n}(c)$ that is a fine moduli for locally free framed t'Hooft pairs $(\mathcal{E},\mathcal{E}\stackrel{\alpha}{\to}\mathcal{E}_{D}),$ for $0<\delta_{1}<8.$ 
    
    
\end{proposition}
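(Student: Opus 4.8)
The plan is to realize $\mathfrak{U}_{\delta,n}(c)$ as an open subscheme of the fine moduli space of stable framed pairs produced by Theorem \ref{main}, and then to check that quasi-projectivity and representability are inherited. First I would fix the numerical data: the Chern character $c = r - [k\cdot H^{2} + l\cdot E^{2}]$ together with the polarization $L = \mathcal{O}(2,-1)$ determines a Hilbert polynomial $P = P_{\mathcal{E}}$ via Riemann--Roch on $\bp3$, and I would take $\delta$ of degree $< 3 = \dim\bp3$ with leading coefficient $\delta_{1}$ in the range $0 < \delta_{1} < 8$ dictated by Lemma \ref{stab}. With this data and framing sheaf $\mathcal{D} = \mathcal{E}_{D}$, Theorem \ref{main} furnishes a projective scheme $\mathcal{M}^{ss}_{\delta,P,\mathcal{E}_{D}}$ coarsely representing the semistable functor, together with an open subscheme $\mathcal{M}^{s}_{\delta,P,\mathcal{E}_{D}}$ that \emph{represents} the stable functor $\mathfrak{M}^{s}_{\delta,P,\mathcal{E}_{D}}$.

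The second step is to locate our objects inside the stable locus. By Lemma \ref{stab} every rank $2$ framed t'Hooft instanton pair $(\mathcal{E},\alpha)$ with $0 < \delta_{1} < 8$ is $\mu_{L,\delta_{1}}$-stable, and by the chain of implications recorded after the definition of $\mu$-stability this forces ordinary stability. Hence each such pair defines a point of $\mathcal{M}^{s}_{\delta,P,\mathcal{E}_{D}}$, and any family of them induces a morphism from its base into $\mathcal{M}^{s}$; in particular $\mathcal{M}^{s}$ is already fine for the larger class of \emph{all} stable framed pairs with the prescribed invariants.

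The central step is to carve out the sublocus corresponding precisely to \emph{locally free t'Hooft instanton} pairs and to show it is open, so that the universal pair on $\mathcal{M}^{s}$ restricts to a universal pair over it. For a $T$-flat family $(\mathfrak{E},\Phi)$ on $\bp3 \times T$ I would argue openness condition by condition: local freeness of the fibres is open by the standard criterion; the defining cohomological vanishings (i)--(iii) of Definition \ref{Instanton-def} are open by semicontinuity (the invariants $c_{1} = 0$ and $c$ being fixed by the choice of $P$); and $\mu_{L}$-semistability of the fibres is open as well. Intersecting these loci with $\mathcal{M}^{s}$ produces the desired subscheme, which I would take to be $\mathfrak{U}_{\delta,n}(c)$. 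Quasi-projectivity is then immediate, since an open subscheme of the projective $\mathcal{M}^{ss}$ is quasi-projective, and the restricted universal pair exhibits $\mathfrak{U}_{\delta,n}(c)$ as fine.

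I expect the genuine obstacle to be the t'Hooft condition itself, which is not manifestly cohomological: a t'Hooft instanton arises via the Hartshorne--Serre correspondence from a disjoint union of lines, or by an elementary transform of such a bundle, and this provenance need not be visibly open in a flat family. The cleanest route I foresee is to present the t'Hooft pairs as the image of an explicit classifying morphism from a parameter space built out of an open subset of the relevant Hilbert scheme of line configurations together with the extension data, and then to show that this image is locally closed in $\mathcal{M}^{s}$ and, on the locally free locus, open. Corollaries \ref{triv-split-bdle} and \ref{triv-split-shf}, which pin down the trivial splitting type on the generic conic, should supply the numerical control needed to separate the t'Hooft locus from neighbouring strata. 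Verifying that this locus is genuinely open, rather than merely locally closed, is where I would concentrate the technical effort.
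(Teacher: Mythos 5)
Your proposal follows essentially the same route as the paper: invoke Theorem \ref{main} to obtain the (quasi-)projective moduli scheme of stable framed pairs, use Lemma \ref{stab} to place the framed t'Hooft pairs in the stable (hence fine) locus for $0<\delta_{1}<8$, and cut out the relevant sublocus as an open subscheme via semicontinuity of the conditions in Definition \ref{Instanton-def} together with local freeness, concluding quasi-projectivity from openness. The obstacle you flag at the end --- that the t'Hooft provenance via Hartshorne--Serre is not manifestly an open condition --- is handled in the paper not by a classifying morphism from a Hilbert scheme of line configurations but exactly along the lines of your closing remark: the trivial splitting type on the generic conic (Corollary \ref{triv-split-bdle}) is taken as the additional open condition characterizing the locus.
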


\begin{proof}
    By Theorem \ref{main}, for any $c\in\ho^{\ast}(\bp3,\mathbb{Q})$ there is a quasi-projective scheme $\mathfrak{M}_{X}(c),$ that represents framed pairs $(\mathcal{E},\mathcal{E}\stackrel{\alpha}{\to}\mathcal{E}_{D}),$ for ${\rm ch}(\mathcal{E})=c$ for some $\delta\geq0$ and $n\in\mathbb{N}.$ It follows from Lemma \ref{stab} that for $c=r-[k\cdot H^{2}+l\cdot E^{2}]$ there is a subscheme of $\mathfrak{M}_{X}(c)$ representing framed t'Hooft instantons, when $0<\delta_{1}<8.$ Furthermore, by semi-continuity; this locus is open since such framed bundles are given by vanishing cohomological conditions (Definition \ref{Instanton-def}) and the additional condition of trivial splitting on the conic $\mathcal{C}$ given by pulling back a line in $\p3$ via the blow-down map (Corollary \ref{triv-split-bdle}). As opens subsets of a quasi-projective scheme are also quasi-projective, the result follows.     
\end{proof}

At last, we shall prove the following 

\begin{theorem}
The fine moduli (quasi-projective) scheme $\mathfrak{U}_{\delta, n}(c)$ that represents locally free framed t'Hooft pairs is unobstructed 

\end{theorem}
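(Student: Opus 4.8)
The plan is to show that the obstruction space for deformations of a framed pair $(\mathcal{E},\alpha)$ vanishes, which guarantees smoothness of the moduli scheme $\mathfrak{U}_{\delta,n}(c)$ at every point. For framed sheaves in the sense of Huybrechts--Lehn, the tangent space and obstruction space to deformations are governed by the hypercohomology of the two-term complex of the homomorphism $\alpha$; concretely, if we set the complex $\mathcal{C}^{\bullet}$ to be $[\mathcal{E}\nd\, \mathcal{E}|_D \to 0]$ encoding the framing, the relevant deformation-obstruction groups are $\ext^1$ and $\ext^2$ of this situation. Since the framing is to a fixed sheaf $\mathcal{E}_D$ (not deformed), deformations of the pair are controlled by $\ext^i(\mathcal{E}, \mathcal{E}(-D))$, where the twist by $-D$ reflects that the framing kills the restriction to $D$. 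Thus the concrete goal is to prove that $\ext^2(\mathcal{E},\mathcal{E}(-1,0)) = 0$, where $\mathcal{E}(-1,0)=\mathcal{E}\otimes\mathcal{O}_{\bp3}(-D)=\ker\alpha$ in the notation of Lemma \ref{stab}.

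First I would set up the precise deformation theory: identify the tangent space to $\mathfrak{U}_{\delta,n}(c)$ at $[(\mathcal{E},\alpha)]$ with $\Hom(\mathcal{E},\mathcal{E}|_D\text{-adjusted})$ and the obstructions with the corresponding $\ext^2$, following the framed-sheaf deformation formalism of \cite{Huy1,Huy2}. For a framing given by restriction to the divisor $D$, there is an exact sequence relating $\mathcal{E}$, its kernel $\mathcal{E}(-D)$, and the restriction $\mathcal{E}_D$, namely $0\to\mathcal{E}(-1,0)\to\mathcal{E}\to\mathcal{E}_D\to 0$. Applying $\Hom(\mathcal{E},-)$ to this sequence reduces the vanishing of the obstruction group to controlling $\ext^i(\mathcal{E},\mathcal{E}(-1,0))$ together with the contribution from $\mathcal{E}_D$, the latter being a sheaf supported on the surface $D\cong\p2$ on which the fixed framing sheaf lives.

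Next I would compute $\ext^2(\mathcal{E},\mathcal{E}(-1,0))$ for a locally free t'Hooft instanton $\mathcal{E}$ on $\bp3$. Since $\mathcal{E}$ is locally free, $\ext^i(\mathcal{E},\mathcal{E}(-1,0))\cong\ho^i(\bp3,\mathcal{E}^{\vee}\otimes\mathcal{E}(-1,0))\cong\ho^i(\bp3,\mathcal{E}nd(\mathcal{E})(-1,0))$. By Serre duality on the threefold $\bp3$ with canonical bundle $\omega_{\bp3}=\mathcal{O}(-4,2)$, one has $\ho^2(\mathcal{E}nd(\mathcal{E})(-1,0))\cong\ho^1(\mathcal{E}nd(\mathcal{E})(-3,2))^{\vee}$ (using $\mathcal{E}nd(\mathcal{E})\cong\mathcal{E}nd(\mathcal{E})^{\vee}$ for the self-dual trace-zero-plus-scalar decomposition). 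I would then exploit the instantonic vanishing conditions of Definition \ref{Instanton-def} together with the $\mu_L$-stability and the trivial splitting on the generic conic (Corollary \ref{triv-split-bdle}) to force this group to vanish, decomposing $\mathcal{E}nd(\mathcal{E})=\mathcal{O}\oplus\mathcal{E}nd_0(\mathcal{E})$ and handling the trace part by direct cohomology computation on $\bp3$ and the trace-free part via the stability-driven vanishing.

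The main obstacle I expect is the trace-free piece $\ho^2(\mathcal{E}nd_0(\mathcal{E})(-1,0))$: the instanton conditions in Definition \ref{Instanton-def} are stated as vanishings of $\ho^i(\mathcal{E}(\text{twist}))$ for the sheaf itself, not for $\mathcal{E}nd(\mathcal{E})$, so the twists appearing after Serre duality will not line up verbatim with the listed conditions. Bridging this gap will require either a monad or spectral-sequence argument expressing $\mathcal{E}nd(\mathcal{E})$ in terms of the defining data of the t'Hooft construction (the union of disjoint lines via Hartshorne--Serre of \cite{ccgm}), or a restriction argument to the divisor $D\cong\p2$ and the exceptional divisor $E$ combined with Lemma \ref{stab-rest-H}, reducing the threefold computation to surface computations where the cohomology of endomorphism bundles of instantons is better understood. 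I would treat this reduction as the technical heart of the argument, and once the relevant $\ext^2$ vanishes the unobstructedness — hence smoothness of $\mathfrak{U}_{\delta,n}(c)$ — follows immediately from the standard deformation-theoretic criterion.
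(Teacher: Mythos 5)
Your setup is correct and agrees with the paper's: the obstruction space is the hyper-Ext group of the framing complex, and since $\alpha$ is restriction to the divisor $D$ the complex $\mathcal{E}\stackrel{\alpha}{\to}\mathcal{E}_D$ is quasi-isomorphic to its kernel $\mathcal{E}(-1,0)$, so everything reduces to showing $\ho^{2}(\mathcal{E}nd(\mathcal{E})(-1,0))=0$. From that point on, however, your proposal is a list of candidate strategies rather than a proof, and you say so yourself: you call the trace-free piece the ``technical heart'' and offer ``either a monad or spectral-sequence argument \dots or a restriction argument'' without carrying out any of them. That vanishing \emph{is} the theorem, so leaving it open is a genuine gap. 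Moreover, the one step you do commit to --- Serre duality sending $\ho^{2}(\mathcal{E}nd(\mathcal{E})(-1,0))$ to $\ho^{1}(\mathcal{E}nd(\mathcal{E})(-3,2))^{\vee}$ --- lands on a twist that matches none of the cohomological conditions of Definition \ref{Instanton-def} (which, as you yourself note, constrain $\mathcal{E}$ and not $\mathcal{E}nd(\mathcal{E})$), so it moves you further from the available hypotheses rather than closer.

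The missing idea is the one the paper uses and which you gesture at but do not execute: the Hartshorne--Serre extension $0\to\mathcal{O}_{\bp3}(-1,1)\to\mathcal{E}\to\mathcal{I}_{X}(1,-1)\to0$ presenting a t'Hooft instanton in terms of a disjoint union of lines $X$. Tensoring this with $\mathcal{E}(-1,0)$ (and using $\mathcal{E}\cong\mathcal{E}^{\vee}$ for a rank-$2$ bundle with $c_{1}=0$) sandwiches $\ho^{2}(\mathcal{E}nd(\mathcal{E})(-1,0))$ between $\ho^{2}(\mathcal{E}(-2,1))$, which vanishes by instanton condition (ii), and $\ho^{2}(\mathcal{E}\otimes\mathcal{I}_{X}(0,-1))$, which is killed by the ideal-sheaf sequence of $X$ together with condition (iii) and the explicit splitting type of $\mathcal{E}$ on the components of $X$. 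No Serre duality, no trace decomposition, and no restriction to $|H|$ or to the exceptional divisor is needed. In short, your reduction to $\ho^{2}(\mathcal{E}nd(\mathcal{E})(-1,0))$ is the easy half of the argument; the half you postponed is exactly where the specific t'Hooft geometry must enter, and it is absent.
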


\begin{proof}

In the deformation study of framed modules \cite[\S4]{Huy}, the obstruction to the smoothness of the moduli space at a point represented by the $(\mathcal{E},\alpha)$ is given by the hyper-ext group $\mathbb{E}{\rm xt}^{2}(\mathcal{E}, \mathcal{E}\stackrel{\alpha}{\to}\mathcal{E}_{D})$ for the complexes $\mathcal{E},$ in degree zero and $\mathcal{E}\stackrel{\alpha}{\to}\mathcal{E}_{D},$ in degrees $0$ and $1.$ In our case, $\mathcal{E}_{D}$ is locally free along the divisor $D\in |\mathcal{O}_{\bp3}(-1,0)|$ and the latter is quasi-ismorphic to $\mathcal{E}(-1,0)$ as a complex in degree zero, Hence 
$$\mathbb{E}{\rm xt}^{2}(\mathcal{E}, \mathcal{E}\stackrel{\alpha}{\to}\mathcal{E}_{D})\cong \ext^{2}(\mathcal{E},\mathcal{E}(-1,0))\cong\ho^{2}(\mathcal{E}nd({E})(-1,0)).$$ 
By construction \cite[Construction 7.2]{ccgm} $\mathcal{E}$ fits the exact sequence 
\begin{equation}\label{const-seq}
    0\to\mathcal{O}_{\bp3}(-1,1)\to\mathcal{E}\to\mathcal{I}_{X}(1,-1)\to0.
\end{equation}
Where $\mathcal{I}_{X}$ is the ideal sheaf of a certain disjoint union of lines, the interested reader might find more details, about $X,$ in \cite[Construction 7.2]{ccgm}. One can easily check that the restriction $\mathcal{E}(-1,0)|_{X_{i}},$ on each irreducible component $X_{i}$ of $X,$ is either $\mathcal{O}_{\p1}(1)^{\oplus 2}$ or $\mathcal{O}_{\p1}(1)\oplus\mathcal{O}_{\p1}$ hence $\ho^{1}(D,\mathcal{E}(-1,0)|_{X})=0$

Twisting the sequence \ref{const-seq} by  $\mathcal{O}_{\bp3}(-1,0)$ and applying the functor $\Hom(\mathcal{E})$ yields the long exact sequence $$\to\ho^{2}(\mathcal{E}(-2,1))\to\ho^{2}(\mathcal{E}nd({E})(-1,0))\to\ho^{2}(\mathcal{E}\otimes\mathcal{I}_{X}(0,-1))\to$$

As $\ho^{2}(\mathcal{E}(-2,1))$ is zero for an instanton sheaf and the right hand term fits into the sequence 
$$\to0=\ho^{1}(D,\mathcal{E}(-1,0)|_{X})\to\ho^{2}(\mathcal{E}\otimes\mathcal{I}_{X}(0,-1))\to\ho^{2}(\mathcal{E}(0,-1))=0,$$ the vanishing of the obstruction follows. 


\end{proof}




\end{document}